\documentclass[11pt]{article}

\usepackage{fullpage}
\usepackage{amsmath}
\usepackage{amssymb}
\usepackage{amsthm}
\usepackage{fancyhdr}
\usepackage{algorithm}
\usepackage{algorithmic}
\usepackage[usenames]{color}
\usepackage{multirow}
\usepackage{graphicx}
\usepackage{caption}
\usepackage{enumitem}
\usepackage{url}
\allowdisplaybreaks

\newtheorem{theorem}{Theorem}[section]
\newtheorem{lemma}[theorem]{Lemma}

\newtheorem{definition}[theorem]{Definition}

\newtheorem{remark}{Remark}
\def\la{\left\langle}
\def\ra{\right\rangle}
\def\ln{\left\|}
\def\rn{\right\|}
\def\lb{\left(}
\def\rb{\right)}

\def\lcb{\left\{}
\def\rcb{\right\}}
\def\lab{\left|}
\def\rab{\right|}
\def\subto{\mbox{ subject to }}
\def\A{\mathcal{A}}
\def\At{\mathcal{A}^*}
\def\R{\mathbb{R}}
\def\S{\mathcal{S}}
\def\P{\mathcal{P}}
\def\G{\mathcal{G}}
\def\E{\mathcal{E}}
\def\N{\mathcal{N}}
\def\H{\mathcal{H}}
\def\rank{\mbox{rank}}
\DeclareMathOperator*{\argmin}{arg\,min}

\newcommand\numberthis{\addtocounter{equation}{1}\tag{\theequation}}

\newcommand{\sigmin}[1]{\sigma_{\min}\lb #1\rb}
\newcommand{\sigmax}[1]{\sigma_{\max}\lb #1\rb}
\newcommand{\sigminsq}[1]{\sigma^2_{\min}\lb #1\rb}
\newcommand{\sigmaxsq}[1]{\sigma^2_{\max}\lb #1\rb}
\title{Guarantees of Riemannian Optimization for Low Rank Matrix Recovery}
\author{Ke Wei\thanks{Department of Mathematics, University of California at Davis, kewei@math.ucdavis.edu}\quad Jian-Feng Cai\thanks{Department of Mathematics, Hong Kong University of Science and Technology, \{jfcai,masyleung\}@ust.hk}\quad Tony F. Chan\thanks{Office of the President, Hong Kong University of Science and Technology, tonyfchan@ust.hk}\quad Shingyu Leung\footnotemark[2]}

\begin{document}
\maketitle
\begin{abstract}
We establish theoretical recovery guarantees of a family of Riemannian optimization algorithms 
for low rank matrix recovery, which  is about recovering an $m\times n$ rank $r$ matrix
from $p < mn$ number of linear measurements. The algorithms are first interpreted as  iterative hard thresholding algorithms with subspace projections. 
Based on this connection, we show that provided the restricted isometry constant  $R_{3r}$  of the sensing operator is less than $C_\kappa /\sqrt{r}$,  
the Riemannian gradient descent algorithm and a restarted variant of the Riemannian conjugate gradient algorithm are guaranteed to converge linearly
to the underlying rank $r$ matrix if they are initialized by one step hard thresholding. Empirical evaluation shows that the algorithms are able to recover a low 
rank matrix from nearly the minimum number of measurements necessary.
\end{abstract}

{\bf Keywords.} Matrix recovery, low rank matrix manifold, Riemannian optimization, gradient descent and conjugate gradient descent methods, restricted isometry constant

{\bf Mathematics Subject Classification.}  15A29, 41A29, 65F10,  68Q25, 15A83,  53B21, 90C26,  65K05
\section{Introduction}\label{sec:intro}
Many applications of interest require  acquisition of very high 
dimensional data which can be prohibitively
expensive if no  simple structures 
of the data are known. In contrast, data with  an inherent low dimensional 
structure can be acquired more efficiently by exploring the simplicity of 
the underlying structure. For instance, in compressed sensing \cite{donoho2006cs,crt2006robust,EJC1}, a high dimensional
 vector of length $n$ with only a few nonzero entries can be encoded by $p<n$ linear 
 measurements, where $p$ is essentially determined by the number of nonzero entries.  Moreover, 
 the  vector can be reconstructed  from 
 the  measurements by computationally efficient algorithms. Another natural representation of data is matrix which 
 can be imposed other different simple structures in addition to few nonzero entries.
 A particularly interesting  notion of matrix simplicity is low rank.
 
 {Low rank matrices can be used to model datasets from a wide range of applications, such as model reduction  \cite{LiVa2009interior}, pattern recognition \cite{Eld2007pattern}, and machine learning \cite{AmFiSrUl2007class,ArEvPo2007mult}. 
In this paper, we are interested in the problem of recovering a low rank matrix from a set of linear measurements.
Let $X\in\R^{m\times n}$ and assume $\rank(X)=r<\min(m,n)$. 
Let $\A\lb\cdot\rb:\R^{m\times n}\rightarrow\R^p$ be a linear map from $m\times n$ matrices to $p$ dimensional vectors of the form 
\begin{equation} 
\A\lb Z\rb_\ell = \la A_\ell,Z\ra,\quad \ell=1,\cdots,p.
\end{equation}
We take $p < mn$ linear measurements of $X$ via $y=\A(X)$. To recover the low rank matrix $X$ from  the measurement vector $y$, it is natural 
to seek the lowest rank matrix consistent with the  measurements by solving a rank minimization problem
\begin{equation}\label{eq:rank_min}
\min\rank(Z) \subto \A(Z) = y.
\end{equation}}There are two typical sensing operators. One is {\em dense sensing}, where each $A_\ell$ is a dense matrix, for example, a Gaussian 
random matrix. The other one is {\em entry sensing} with the sensing matrices $A_\ell$ only having one nonzero entry equal to one, which corresponds to measuring 
the entries of a matrix  directly. 
When a subset of the matrix entries are directly measured, seeking  a low rank matrix consistent with the known entries is typically referred to as {\em matrix completion} \cite{candesrecht2009mc}. Recovering a low rank matrix when each sensing matrix is dense is usually referred to as {\em low rank matrix recovery} \cite{candesplan2009oracle}. This paper investigates recovery guarantees of the Riemannian optimization algorithms  for low rank matrix recovery. 

Notice that \eqref{eq:rank_min} is a non-convex optimization problem and computationally intractable. One of the well studied approaches is to replace the 
rank objective in \eqref{eq:rank_min} with its nearest convex relaxation, the nuclear norm of matrices which is the sum of the singular values, and then solve the following nuclear norm minimization problem
\begin{equation}\label{eq:nnm_min}
\min\ln Z\rn_* \subto \A(Z) = y.
\end{equation}
The equivalence of solutions between
\eqref{eq:rank_min} and \eqref{eq:nnm_min} for low rank matrix recovery can be established in terms of the restricted isometry constant of the sensing operator which was first introduced in \cite{candestao2005decode} for compressed sensing
and subsequently extended to low rank matrix recovery in \cite{rechtfazelparrilo2010nnm}.
\begin{definition}[Restricted Isometry Constant (RIC) \cite{rechtfazelparrilo2010nnm}]\label{def:RIC}
Let $\A(\cdot)$ be a linear operator from $m\times n$ matrices to vectors of length $p$. For any integer $0<r\leq\min(m,n)$, the restricted isometry constant, 
$R_r$, is defined as the smallest number such that 
\begin{equation}
\lb1-R_r\rb\ln Z\rn_F^2\leq\ln \A\lb Z\rb\rn_2^2\leq\lb 1+R_r\rb\ln Z\rn_F^2
\end{equation}
holds for all the matrices $Z$ of rank at most $r$. 
\end{definition}

It has been proven  that if the RIC of $\A\lb \cdot\rb$ with rank $5r$ is less than a small constant, nuclear norm minization \eqref{eq:nnm_min} is guaranteed 
to recover any measured rank $r$ matrix \cite{rechtfazelparrilo2010nnm}. Furthermore, this condition can be satisfied with overwhelmingly high probability for a large family of random measurement matrices, for example  the normalized Gaussian
and Bernoulli  matrices, provided $p\geq C\cdot(m+n-r)r\log^\alpha\lb\max\lb m,n\rb\rb$ for some numerical constants $C>0,~\alpha\geq 0$ \cite{rechtfazelparrilo2010nnm, candesplan2009oracle}. However, for the entry sensing operator,
it cannot have a small RIC for any $r>0$, and more quantitative and probabilistic sampling complexity has been established for \eqref{eq:nnm_min} based on the notion of incoherence \cite{candesrecht2009mc, candestao2009mc, recht2011simple, gross2011recoverlowrank}.

Nuclear norm minimization is {amenable} to detailed analysis \cite{rechtfazelparrilo2010nnm, candesrecht2009mc, candestao2009mc, recht2011simple, gross2011recoverlowrank, brwxbh, oymakhassibi2010nullspace,rauhut_stable_low_rank, LivingEdge}. However, finding the solution to \eqref{eq:nnm_min} by the interior-point methods needs to solve systems of linear equations to 
compute the Newton direction in each iteration, which limits its applicability for large $m$ and $n$. First-order methods for solving \eqref{eq:nnm_min} usually invoke the singular value thresholding \cite{ccs2010svt}. Alternative to  convex relaxation, there have been many algorithms 
which are designed to target \eqref{eq:rank_min} directly, including iterative hard thresholding \cite{CGIHT, jmd2010svp,cevher2012matrixrecipes,tw2012nihtmc}, alternating minimization \cite{haldahernando2009pf, wyz2012lmafit, tannerwei_asd} , and Riemannian optimization \cite{bart2012riemannian,ngosaad2012scgrassmc,mishra_geometry,mmbs2012fixrank,R3MC,cambier}. {\em  In this paper, we study a family of Riemannian optimization algorithms  on the embedded manifold of rank $r$ matrices. We first establish their connections with iterative hard thresholding algorithms. Then we prove local convergence of the Riemannian gradient descent algorithm in terms of the
restricted isometry constant of the sensing operator. As a result, the algorithm is guaranteed to converge linearly to the measured low rank matrix if it is initialized by one step hard thresholding. For the Riemannian conjugate gradient descent algorithm, we introduce a restarted variant for which a similar recovery guarantee can be established while the computational 
efficiency is maintained.}

The rest of this paper is organized as follows. In Sec.~\ref{sec:iht}, we briefly review  iterative hard thresholding algorithms for low rank matrix recovery and show their connections with the 
Riemannian optimization algorithms on the embedded manifold of low rank matrices.  Then we present the main results of this paper. In Sec.~\ref{sec:numerics}, empirical results are presented, which demonstrate the efficiency of the 
Riemannian optimization algorithms for matrix recovery. Section~\ref{sec:analysis} presents the proofs of the main results and Sec.~\ref{sec:discuss} concludes the paper 
with future research directions.

\section{Algorithms and Main Results}\label{sec:iht}
\subsection{Iterative Hard Thresholding and Riemannian Optimization}
Iterative hard thresholding is a family of simple yet efficient algorithms for compressed sensing \cite{bludav2009iht,blumensathdavies2010niht,CGIHT,CGIHTnoise} and low rank matrix recovery \cite{CGIHT, jmd2010svp,cevher2012matrixrecipes,tw2012nihtmc}.
The simplest iterative hard thresholding algorithm for matrix recovery is the normalized iterative hard thresholding (NIHT \cite{tw2012nihtmc}, also known as SVP \cite{jmd2010svp} or IHT \cite{goldfarbma2011fpca} when the stepsize is fixed), see Alg.~\ref{alg:niht}. NIHT applies the
projected gradient descent method to a  reformulation of \eqref{eq:rank_min}
\begin{equation}\label{eq:reform}
\min_{Z\in\R^{m\times n}}\frac{1}{2}\ln y-\A\lb Z\rb\rn_2^2~\subto~\rank(Z)=r.
\end{equation}
\begin{algorithm}[htp]
\caption{Normalized Iterative Hard Thresholding (NIHT \cite{tw2012nihtmc}) }\label{alg:niht}
\begin{algorithmic}
\STATE\textbf{Initilization}: $X_0$ and its top $r$ left singular vector space $U_0$
\FOR{$l=0,1,\cdots$}
\STATE 1. $G_l=\At\lb y-\A\lb X_l\rb\rb$
\STATE 2. $\alpha_l=\frac{\ln\P_{U_l}\lb G_l\rb\rn_F^2}{\ln \A\P_{U_l}\lb G_l\rb\rn_2^2}$
\STATE 3. $W_l = X_l+\alpha_l G_l$
\STATE 4. $X_{l+1}=\H_r(W_l)$ 
\ENDFOR
\end{algorithmic}
\end{algorithm}
In each iteration of NIHT, the current estimate $X_l$ is updated along the gradient descent direction $G_l$ with {the locally steepest descent 
stepsize $\alpha_l$ defined as 
\begin{align*}
\alpha_l:=\argmin_\alpha\frac{1}{2}\ln y-\A\lb X_l+\alpha \P_{U_l}\lb G_l\rb\rb\rn_F^2,\numberthis\label{eq:niht_alpha}
\end{align*}
 where $\P_{U_l}=U_lU_l^*$ denotes the 
projection to the left singular vector subspace\footnote{{The left singular vector subspace of $X_l$ in Algs.~\ref{alg:niht} and Algs.~\ref{alg:cgiht} can be replaced by its right singular vector subspace, see \cite{tw2012nihtmc}.}} of $X_l$}. Then the new estimate $X_{l+1}$ is obtained by thresholding $W_l$ to the set of rank $r$ matrices. In Alg.~\ref{alg:niht}, $\H_r(\cdot)$ denotes 
the hard thresholding operator which first computes the singular value decomposition  of a matrix and then  sets all but the $r$ largest singular values to zero
\begin{equation}
\H_r(Z):=U\Sigma_r V^*\quad\mbox{where}\quad \Sigma_r(i,i):=\begin{cases}
\Sigma(i,i) & i\leq r\\
0 & i > r.
\end{cases}
\end{equation}
 {When there are singular values of $Z$ with multiplicity more than one, $\H_r(Z)$ can use any one of the repeated singular values and the corresponding singular vectors. In this case, it still returns the best (though not unique) rank $r$ approximation of $Z$ in the Frobenius norm.}

NIHT has been proven to be able to recover a rank $r$ matrix if the RIC of $\A(\cdot)$ satisfies $R_{3r}\leq 1/5$ \cite{tw2012nihtmc}. Despite 
the optimal recovery guarantee of NIHT, it suffers from the slow asymptotic convergence rate of the gradient descent method.
Other sophisticated variants have been designed to overcome the slow asymptotic convergence rate of NIHT. For example in 
SVP-Newton \cite{jmd2010svp}, a least square subproblem restricted onto the current iterate subspace is solved in each iteration. In \cite{CGIHT}, 
a family of conjugate gradient iterative hard thresholding  (CGIHT) algorithms have been developed for low rank matrix recovery
which combines the fast asymptotic convergence rate of more sophisticated algorithms and the low per iteration complexity
of NIHT, see Alg.~\ref{alg:cgiht} for the non restarted CGIHT.
\begin{algorithm}[htp]
\caption{Conjugate Gradient Iterative Hard Thresholding (CGIHT \cite{CGIHT})}\label{alg:cgiht}
\begin{algorithmic}
\STATE\textbf{Initilization}: $X_0$ and its top $r$ left singular vector space $U_0$, $P_{-1}=0$
\FOR{$l=0,1,\cdots$}
\STATE 1. $G_l=\At\lb y-\A\lb X_l\rb\rb$
\STATE 2. $\beta_l = -\frac{\la\A\P_{U_l}\lb G_l\rb,\A\P_{U_l}\lb P_{l-1}\rb\ra}{\ln\A\P_{U_l}\lb P_{l-1}\rb\rn_2^2}$
\STATE 3. $P_l=G_l+\beta_l P_{l-1}$
\STATE 4. $\alpha_l=\frac{\la \P_{U_l}\lb G_l\rb,\P_{U_l}\lb P_l\rb\ra}{\ln\A\P_{U_l}\lb P_l\rb\rn_2^2}$ 
\STATE 5. $W_l=X_l+\alpha_l P_l$
\STATE 6. $X_{l+1}=\H_r(W_l)$
\ENDFOR
\end{algorithmic}
\end{algorithm}

In each iteration of CGIHT, the current estimate $X_l$ is updated along the search direction $P_l$ with the locally steepest descent stepsize $\alpha_l$ defined in a similar way to \eqref{eq:niht_alpha}.
The current search direction $P_l$ is a linear combination of the gradient descent direction and the previous search direction. The selection of the orthogonalization weight 
$\beta_l$ in Alg.~\ref{alg:cgiht} ensures that $P_l$ is conjugate orthogonal to the $P_{l-1}$ when restricted to the current subspace determined by $U_l$. It has been proven that a projected
variant of Alg.~\ref{alg:cgiht} has the nearly optimal recovery guarantee  based on the RIC of the sensing operator \cite{CGIHT}. 

In NIHT and CGIHT, the current estimate is updated along a line search direction which departs from the manifold of rank $r$ matrices. The singular value 
decomposition is required in each iteration to project the estimate back onto the rank $r$ matrix manifold. 
The SVD on a full  $m\times n$ matrix is typically needed  as the search direction is a global gradient descent or conjugate gradient descent direction which does not belong to any particular low dimensional subspace. 
The computational complexity of the SVD on an $m\times n$ matrix is $O(n^3)$ when 
$m$ is proportional to $n$ which is computationally expensive. However, if the estimate is 
updated along a  search direction in a  low dimensional subspace, the intermediate matrix $W_l$
may also be a low rank matrix. So it is possible to work on a  matrix of size much smaller than $m$ and $n$
when truncating $W_l$ to its nearest rank $r$ approximation. The generalized NIHT and CGIHT for low rank matrix 
recovery \cite{KeWeiThesis} are presented in Algs.~\ref{alg:gniht} and \ref{alg:gcgiht} respectively, which explore 
the idea of projecting the search direction onto a low dimensional subspace associated with 
the current estimate. 
\begin{algorithm}[htp]
\caption{Riemannian Gradient Descent (RGrad)}\label{alg:gniht}
\begin{algorithmic}
\STATE\textbf{Initilization}: $X_0$
\FOR{$l=0,1,\cdots$}
\STATE 1. $G_l=\At\lb y-\A\lb X_l\rb\rb$
\STATE 2. $\alpha_l=\frac{\ln\P_{\S_l}\lb G_l\rb\rn_F^2}{\ln \A\P_{\S_l}\lb G_l\rb\rn_2^2}$
\STATE 3. $W_l = X_l+\alpha_l\P_{\S_l} \lb G_l\rb$
\STATE 4. $X_{l+1}=\H_r(W_l)$ 
\ENDFOR
\end{algorithmic}
\end{algorithm}

\begin{algorithm}[htp]
\caption{Riemannian Conjugate Gradient Descent (RCG)}\label{alg:gcgiht}
\begin{algorithmic}
\STATE\textbf{Initilization}: $X_0$, $\beta_0=0$ and $P_{-1}=0$ 
\FOR{$l=0,1,\cdots$}
\STATE 1. $G_l=\At\lb y-\A\lb X_l\rb\rb$
\STATE 2. $\beta_l = -\frac{\la\A\P_{\S_l}\lb G_l\rb,\A\P_{\S_l}\lb P_{l-1}\rb\ra}{\ln\A\P_{\S_l}\lb P_{l-1}\rb\rn_2^2}$
\STATE 3. $P_l=\P_{\S_l}\lb G_l\rb+\beta_l \P_{\S_l}\lb P_{l-1}\rb$
\STATE 4. $\alpha_l=\frac{\la \P_{\S_l}\lb G_l\rb,\P_{\S_l}\lb P_l\rb\ra}{\ln\A\P_{\S_l}\lb P_l\rb\rn_2^2}$ 
\STATE 5. $W_l=X_l+\alpha_l P_l$
\STATE 6. $X_{l+1}=\H_r(W_l)$
\ENDFOR
\end{algorithmic}
\end{algorithm}

Compared with NIHT in Alg.~\ref{alg:niht}, the major difference in Alg.~\ref{alg:gniht} is at step $3$, where 
the current estimate is updated along a  projected gradient descent direction rather than the gradient descent direction.
The search stepsize in Alg.~\ref{alg:gniht} is selected to be the steepest descent stepsize along the projected gradient descent direction.
In Alg.~\ref{alg:gcgiht}, the search direction is selected to be an appropriate 
linear combination of the projected gradient descent direction and the previous search direction projected 
onto the current  iterate subspace. 
As in Alg.~\ref{alg:cgiht}, the selection of $\beta_l$ in Alg.~\ref{alg:gcgiht} guarantees that the new search direction $P_l$
is conjugate orthogonal to the previous search direction $P_{l-1}$ when projected onto the subspace $\S_l$. Motivated 
by non-linear conjugate gradient method in optimization, there are other choices for $\beta_l$ \cite{AbMaSe2008manifold}, including
\begin{align*}
&\mbox{Fletcher-Reeves}\qquad \beta_l^{\mbox{FR}}=\frac{\ln\P_{\S_l}\lb G_l\rb\rn_F^2}{\ln\P_{\S_{l-1}}\lb G_{l-1}\rb\rn_F^2},\\
&\mbox{Polak-Ribi\`ere}\qquad \beta_l^{\mbox{PR}}=\frac{\la \P_{\S_l}\lb G_l\rb, \P_{\S_l}\lb G_l\rb-\P_{\S_l}\lb P_{\S_{l-1}}\lb G_{l-1}\rb\rb\ra}{\ln \P_{\S_{l-1}}\lb G_{l-1}\rb\rn_F^2},\numberthis\label{eq:beta_all}\\
&\mbox{Polak-Ribi\`ere+}\qquad\beta_l^{\mbox{PR+}}=\max\{\beta_l^{\mbox{PR}},0\}.
\end{align*}
\subsection{Selection of the Subspace $\S_l$}\label{subsec:subspace_selection}
Let $X_l$ be the  current rank $r$ estimate in Algs.~\ref{alg:gniht}~or~\ref{alg:gcgiht} and $X_l=U_l\Sigma_l V_l^*$ be the reduced
singular value decomposition with $U_l\in\R^{m\times r},~\Sigma\in\R^{r\times r}$ and $V_l\in\R^{n\times r}$. If $\S_l$ is selected to be the column space of $X_l$ 
\begin{equation}
\S_l=\lcb Z\in\R^{m\times n}:~Z = U_l R^*\mbox{ with }R\in\R^{n\times r}\rcb,
\end{equation}
then the intermediate matrix $W_l\in \S_l$ is already a rank $r$ matrix and $\S_l$ remains unchanged during all the iterations. So no hard thresholding is needed
to project $W_l$ to its nearest rank $r$ approximation. However, if $X\not\in \S_0$, the algorithm will never converge to $X$, but 
to a locally optimal solution.

Similar conclusion can be drawn when $\S_l$ is selected to be the row space of $X_l$
\begin{equation}
\S_l=\lcb Z\in\R^{m\times n}:~Z = L V_l^*\mbox{ with }L\in\R^{m\times r}\rcb.
\end{equation}
So it is desirable to use a  larger $\S_l$ in each iteration so that the subspace can be updated and can capture more and more information
of the underlying low rank matrix. A potential choice  is the direct sum of the column and row subspaces
\begin{equation}\label{eq:tangent_space}
\S_l=\lcb Z\in\R^{m\times n}:~Z = U_l R^*+L V_l^*\mbox{ with }L\in\R^{m\times r},~R\in\R^{n\times r}\rcb.
\end{equation}
The subspace $\S_l$ in \eqref{eq:tangent_space} turns out to be the tangent space of the smooth manifold of rank $r$ matrices at the current estimate $X_l$ \cite{bart2012riemannian}.  It is well known that all the $m\times n$ rank $r$ matrices form a smooth manifold of dimension $(m+n-r)r$ \cite{bart2012riemannian}, which coincides with the dimension of $\S_l$.   {With this subspace selection, Algs.~\ref{alg:gniht}~and~\ref{alg:gcgiht} are indeed the Riemannian gradient descent and conjugate gradient 
descent algorithms  on the embedded manifold of rank $r$ matrices under the metric of canonical matrix inner product.} The projection of the previous search direction onto the current tangent subspace, $\P_{\S_l}P_{l-1}$, corresponds to ``vector transport'' in  Riemannian optimization and the hard thresholding operator corresponds to a type of ``retraction''. The Riemannian conjugate 
gradient algorithm for matrix completion developed in \cite{bart2012riemannian} can be recovered from Alg.~\ref{alg:gcgiht} with the selection of $\beta_l$ being replaced by $\beta^{\mbox{PR+}}$ in \eqref{eq:beta_all}. For more details about Riemannian optimization, we refer the reader to \cite{AbMaSe2008manifold}.  {In particular, the differential geometry interpretations of the Riemannian optimization algorithms on  the embedded manifold of low rank matrices can be found in \cite{bart2012riemannian}.}

\subsection{SVD of $W_l$ with $O(r^3)$ Complexity}
 In the sequel, we  assume $\S_l$ is selected to be the tangent space specified in \eqref{eq:tangent_space}, unless stated otherwise. So Algs.~\ref{alg:gniht}~and~\ref{alg:gcgiht} are  the Riemannian gradient descent and conjugate gradient descent algorithms. First notice that the matrices in $\S_l$ 
are at most rank $2r$. In addition, for any matrix $Z\in\R^{m\times n}$, the projection of $Z$ onto $\S_l$ can be computed as 
\begin{equation}\label{eq:S_proj}
\P_{\S_l}\lb Z\rb = U_lU_l^*Z + ZV_lV_l^*-U_lU_l^*ZV_lV_l^*.
\end{equation}
In  Algs.~\ref{alg:gniht}~and~\ref{alg:gcgiht}, the SVD is still required when projecting $W_l$ onto the rank $r$  manifold since it is not a rank $r$ matrix. 
However, as $W_l$ is in the low dimensional subspace $\S_l$, the SVD of $W_l$ can be computed from the SVD of a smaller size matrix. To see this,
notice that the intermediate matrix  $W_l$ in Algs.~\ref{alg:gniht}~and~\ref{alg:gcgiht} has the form
\begin{equation*}
W_l = X_l + \P_{\S_l}\lb Z_l\rb,
\end{equation*}
where $Z_l=\alpha_lG_l$ in Alg.~\ref{alg:gniht} and $Z_l=\alpha_l\lb G_l+\beta_l P_{l-1}\rb$ in Alg.~\ref{alg:gcgiht}. So direct calculation gives 
\begin{align*}
W_l &=X_l + \P_{\S_l}\lb Z_l\rb\\
&=U_l\Sigma_l V_l^*+U_lU_l^*Z_l + Z_lV_lV_l^*-U_lU_l^*Z_lV_lV_l^*\\
&=U_l\Sigma_l V_l^*+U_lU_l^*Z_lV_lV_l^*
+U_lU_l^*Z_l-U_lU_l^*Z_lV_lV_l^*
+Z_lV_lV_l^*-U_lU_l^*Z_lV_lV_l^*\\
&=U_l\lb \Sigma_l+U_l^*Z_lV_l\rb V_l^*+U_lU_l^*Z_l\lb I-V_lV_l^*\rb+\lb I-U_lU_l^*\rb Z_lV_lV_l^*\\
&:=U_l\lb \Sigma_l+U_l^*Z_lV_l\rb V_l^*+U_lY_1^*+Y_2V_l^*.
\end{align*}
Let $Y_1=Q_1R_1$ and $Y_2=Q_2R_2$ be the QR factorizations of $Y_1$ and $Y_2$ respectively. Then we have $U_l^*Q_2=0$, $V_l^*Q_1=0$ and $W_l$ can be rewritten as 
\begin{align*}
W_l &=U_l\lb \Sigma_l+U_l^*Z_lV_l\rb V_l^*+U_lR_1^*Q_1^*+Q_2R_2V_l^*\\
&=\begin{bmatrix}U_l & Q_2\end{bmatrix}
\begin{bmatrix}
\Sigma_l+U_l^*Z_lV_l & R_1^*\\
R_2 & 0
\end{bmatrix}
\begin{bmatrix}
V_l^*\\
Q_1^*
\end{bmatrix}\\
&:=\begin{bmatrix}U_l & Q_2\end{bmatrix}
M_l
\begin{bmatrix}
V_l^*\\
Q_1^*
\end{bmatrix},
\end{align*}
where $M_l$ is a $2r\times 2r$ matrix.  Since $\begin{bmatrix}U_l & Q_2\end{bmatrix}$ and $\begin{bmatrix}V_l & Q_1\end{bmatrix}$ are 
both orthogonal matrices, the SVD of $W_l$ can be obtained from the SVD of $M_l$, which can be computed using $O(r^3)$ floating point operations (flops) instead of $O(n^3)$ flops.
\subsection{Main Results}
We first present recovery guarantee of the Riemannian gradient descent algorithm (Alg.~\ref{alg:gniht}) in terms of the restricted isometry constant of the sensing operator.  
\begin{theorem}[{Recovery} guarantee of Riemannian gradient descent (Alg.~\ref{alg:gniht}) for low rank matrix recovery]\label{thm:gniht_dense} Let $\A(\cdot)$ be a linear map 
from $\R^{m\times n}$ to $\R^p$ with $p<mn$, and $y=\A(X)$ with $\rank(X)=r$. Define the following constant
{\begin{equation}\label{eq:gniht_dense_gamma_thm}
\gamma=\frac{4R_{2r}+2R_{3r}}{1-R_{2r}}+\frac{4R_{2r}}{\sigmin{X}}\ln X\rn_F.\end{equation}}
Then provided $\gamma<1$, the iterates of Alg.~\ref{alg:gniht} with initial point $X_0=\H_r\lb\At\lb y\rb\rb$ satisfy
\begin{equation}
\ln X_l-X\rn_F\leq \mu^l\ln X_0-X\rn_F,
\end{equation}
where $\mu=\gamma$.
In particular, $\gamma<1$ can be satisfied if
{\begin{equation}\label{eq:rgrad_ric_cond}
R_{3r}\leq \frac{\sigmin{X}}{\sigmax{X}}\cdot\frac{1}{12\sqrt{r}}.
\end{equation}}
\end{theorem}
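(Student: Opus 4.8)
The plan is to reduce the theorem to a one-step contraction estimate $\ln X_{l+1}-X\rn_F\le\gamma\ln X_l-X\rn_F$ and then to check that \eqref{eq:rgrad_ric_cond} forces $\gamma<1$. Writing $e_l=X_l-X$ and using $y=\A(X)$, the gradient is $G_l=\At\A(X-X_l)=-\At\A e_l$, so the entire iteration is driven by the single error matrix $e_l$. Two independent ingredients control it: a near-isometry property of the hard-thresholding step $\H_r$, and restricted-isometry estimates for $\A$ restricted to the rank-$2r$ tangent space $\S_l$.

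First I would handle the retraction. Since $X_{l+1}=\H_r(W_l)$ is a best rank-$r$ approximation of $W_l$ and $X$ has rank $r$, the triangle inequality gives $\ln X_{l+1}-X\rn_F\le 2\ln W_l-X\rn_F$, and I would split $W_l-X$ along $\S_l$ and its orthogonal complement. Because $X_l\in\S_l$, the normal component is exactly $-\P_{\S_l^\perp}X$, and from \eqref{eq:S_proj} one has the identity $\P_{\S_l^\perp}X=(I-U_lU_l^*)X(I-V_lV_l^*)=(I-U_lU_l^*)(X-X_l)(I-V_lV_l^*)$, where the second equality uses that $U_l,V_l$ are the singular subspaces of $X_l$. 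This yields the two bounds I need: the crude first-order estimate $\ln\P_{\S_l^\perp}X\rn_F\le\ln e_l\rn_F$, and the curvature estimate $\ln\P_{\S_l^\perp}X\rn_F\le\ln e_l\rn_F^2/\sigmin{X}$, the latter capturing that the tangent space is only a first-order approximation of the manifold.

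Next I would estimate the tangent component $\P_{\S_l}e_l+\alpha_l\P_{\S_l}G_l=(\P_{\S_l}-\alpha_l\P_{\S_l}\At\A\P_{\S_l})\P_{\S_l}e_l+\alpha_l\P_{\S_l}\At\A\P_{\S_l^\perp}X$. Every matrix in $\S_l$ has rank at most $2r$, so Definition~\ref{def:RIC} (via polarization) gives $\ln\P_{\S_l}(\At\A-I)\P_{\S_l}\rn\le R_{2r}$ as an operator on $\S_l$, and sandwiching the steepest-descent stepsize yields $1/(1+R_{2r})\le\alpha_l\le 1/(1-R_{2r})$. Together these bound the first operator by $|1-\alpha_l|+\alpha_l R_{2r}\le 2R_{2r}/(1-R_{2r})$. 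For the cross term, $\P_{\S_l^\perp}X$ has rank at most $r$ and is orthogonal to $\S_l$, so their combined rank is at most $3r$ and the restricted isometry property gives $\ln\P_{\S_l}\At\A\P_{\S_l^\perp}X\rn_F\le R_{3r}\ln\P_{\S_l^\perp}X\rn_F$.

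Collecting the pieces, using $\ln\P_{\S_l^\perp}X\rn_F\le\ln e_l\rn_F$ for the $R_{3r}$ contribution while reserving the curvature bound for the bare normal component, I expect the recursion
\[\ln e_{l+1}\rn_F\le\lsb\frac{4R_{2r}+2R_{3r}}{1-R_{2r}}+\frac{2\ln e_l\rn_F}{\sigmin{X}}\rsb\ln e_l\rn_F.\]
I would close this by induction: a one-step analysis of the initialization $X_0=\H_r(\At\A X)$ gives $\ln e_0\rn_F\le 2R_{2r}\ln X\rn_F$ by the same retraction-plus-RIP argument, and if $\ln e_l\rn_F\le\ln e_0\rn_F$ then the bracket is at most $\gamma$, so $\ln e_{l+1}\rn_F\le\gamma\ln e_l\rn_F\le\ln e_0\rn_F$ and the hypothesis propagates. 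Substituting $\ln e_l\rn_F\le 2R_{2r}\ln X\rn_F$ turns the curvature term into $4R_{2r}\ln X\rn_F/\sigmin{X}$, recovering $\gamma$ exactly. Finally \eqref{eq:rgrad_ric_cond} is verified by elementary inequalities: the monotonicity $R_{2r}\le R_{3r}$ bounds the first fraction by roughly $6R_{3r}\le\tfrac12$, while $\ln X\rn_F\le\sqrt r\,\sigmax{X}$ bounds the curvature term by $\tfrac13$, so $\gamma\le\tfrac56<1$. The hard part will be the geometric control of the normal component $\P_{\S_l^\perp}X$: the factor $2$ lost at the retraction multiplies it directly, so contraction survives only because the curvature estimate makes this contribution second order, which in turn forces the inductive maintenance of $\ln e_l\rn_F$ small enough that $\sigmin{X_l}$ stays comparable to $\sigmin{X}$.
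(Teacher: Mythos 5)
Your plan reproduces the paper's proof architecture essentially step for step: the retraction inequality $\ln X_{l+1}-X\rn_F\le 2\ln W_l-X\rn_F$, the tangent/normal splitting with the operator bound $2R_{2r}/(1-R_{2r})$, the rank-$3r$ cross-term bound, the quadratic curvature estimate for $(I-\P_{\S_l})X$, the identical recursion, and the induction that substitutes $\ln e_l\rn_F\le\ln e_0\rn_F\le 2R_{2r}\ln X\rn_F$ into the bracket to recover $\gamma$. The genuine gap is the initialization. Taken literally, ``the same retraction-plus-RIP argument'' reads: $\ln X_0-X\rn_F\le 2\ln \At\lb y\rb-X\rn_F$ by best rank-$r$ approximation, then bound $\ln \At\lb y\rb-X\rn_F=\ln\lb\At\A-I\rb X\rn_F$ by the RIC. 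That last step is false: the restricted isometry constant controls only compressions of $\At\A-I$ to low-rank matrices (quantities like $\ln\P_{Q}\lb\At\A-I\rb\P_{Q}\rn$), not the matrix $\lb\At\A-I\rb X$ itself, which is generically full rank and whose Frobenius norm for Gaussian sensing with $p\ll mn$ is far larger than $R_{2r}\ln X\rn_F$. The paper needs an extra idea here: let $Q_0$ span the columns of both $X_0$ and $X$ (a $2r$-dimensional column space); since the component of $\At\lb y\rb$ orthogonal to this column space contributes equally to $\ln X_0-\At\lb y\rb\rn_F^2$ and $\ln X-\At\lb y\rb\rn_F^2$, the minimizing property of $X_0$ transfers to $\ln X_0-\P_{Q_0}\lb\At\lb y\rb\rb\rn_F\le\ln X-\P_{Q_0}\lb\At\lb y\rb\rb\rn_F$, and only then do the triangle inequality and the bound $\ln\lb\P_{Q_0}-\P_{Q_0}\At\A\P_{Q_0}\rb\lb X\rb\rn_F\le R_{2r}\ln X\rn_F$ yield $\ln e_0\rn_F\le 2R_{2r}\ln X\rn_F$. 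Without this projection device your induction has no valid base case.

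A second, smaller gap: the identity $\lb I-\P_{\S_l}\rb X=\lb I-U_lU_l^*\rb\lb X-X_l\rb\lb I-V_lV_l^*\rb$ is linear in $e_l$, so it cannot by itself yield the quadratic bound $\ln e_l\rn_F^2/\sigmin{X}$; you need in addition a Wedin-type perturbation bound on singular subspaces (the paper's Lem.~\ref{lem:for_proj_error}), namely $\ln U_lU_l^*-UU^*\rn_2\le\ln X_l-X\rn_2/\sigmin{X}$, proved by writing $UU^*=XV\Sigma^{-1}U^*$ so that $\lb I-U_lU_l^*\rb UU^*=\lb I-U_lU_l^*\rb\lb X-X_l\rb V\Sigma^{-1}U^*$. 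Inserting $X=UU^*X$ into your identity and applying this bound gives the curvature estimate. You correctly flag this as the hard part, but note that the resulting bound involves only $\sigmin{X}$ of the fixed target matrix, so no comparability of $\sigma_{\min}(X_l)$ with $\sigmin{X}$ is required, contrary to your closing remark. All remaining steps of your proposal (operator bounds, stepsize sandwich, cross-term estimate, and the final arithmetic verifying that \eqref{eq:rgrad_ric_cond} forces $\gamma<1$ via $R_{2r}\le R_{3r}$ and $\ln X\rn_F\le\sqrt{r}\,\sigmax{X}$) match the paper and are correct.
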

For the Riemannian conjugate gradient descent method, we consider a restarted variant of Alg.~\ref{alg:gcgiht}. In 
the restarted Riemannian conjugate gradient descent algorithm,  $\beta_l$ is set to zero and restarting occurs whenever one of the following two conditions is violated  
\begin{equation}\label{eq:restart_cond_rm}
{\frac{\lab\la\P_{\S_l}\lb G_l\rb, \P_{\S_l}\lb P_{l-1}\rb\ra\rab}{\ln\P_{\S_l}\lb G_l\rb \rn_F\ln \P_{\S_l}\lb P_{l-1}\rb\rn_F}\leq \kappa_1} \quad\mbox{and}\quad
 \ln \P_{\S_l}\lb G_l\rb\rn_F\leq \kappa_2\ln \P_{\S_l}\lb P_{l-1}\rb\rn_F,
\end{equation}
where $0<\kappa_1 <1$ and $\kappa_2\geq 1$ are numerical constants; otherwise, $\beta_l$ is computed using the formula in Alg.~\ref{alg:gcgiht}.
We want to emphasize that the restarting conditions are introduced not only for the sake of proof, but also to improve the robustness 
of the non-linear conjugate gradient method \cite{wrightnocedal2006no, bart2012riemannian}. The first condition guarantees that the residual should be substantially orthogonal to the previous search direction 
when projected onto the current iterate subspace so that the new search direction can be sufficiently gradient related. In the classical conjugate gradient method 
for least square systems, the current residual is exactly orthogonal to the previous search direction. The second condition implies that the current residual cannot be too large 
when compared with the projection of the previous search direction which is in turn proportional to the projection of the previous residual. In
our implementations, we take $\kappa_1=0.1$ and $\kappa_2=1$.
\begin{theorem}[Recovery guarantee of restarted Riemannian conjugate gradient descent (Alg.~\ref{alg:gcgiht}) for low rank matrix recovery]\label{thm:gcgiht_dense} Let $\A(\cdot)$ be a linear map 
from $\R^{m\times n}$ to $\R^p$ with $p<mn$, and $y=\A(X)$ with $\rank(X)=r$. Define the following constants
\begin{align*}
&\varepsilon_\alpha = \frac{R_{2r}}{(1-R_{2r})-\kappa_1(1+R_{2r})},\quad\varepsilon_\beta = \frac{\kappa_2R_{2r}}{1-R_{2r}}+\frac{\kappa_1\kappa_2}{1-R_{2r}},\\
&{\tau_1 = 2(R_{2r}+R_{3r})(1+\varepsilon_\alpha)+2\varepsilon_\alpha+\frac{4R_{2r}}{\sigmin{X}}\ln X\rn_F+\varepsilon_\beta},\\&\tau_2=2\varepsilon_\beta \lb 1+\varepsilon_\alpha\rb\lb 1+R_{2r}\rb,\\
&\gamma=\tau_1+\tau_2.\numberthis\label{eq:gcgiht_dense_gamma_thm}
\end{align*}
Then provided $\gamma<1$, the iterates of the restarted conjugate gradient descent algorithm  (Alg.~\ref{alg:gcgiht} restarting subject to the conditions listed in \eqref{eq:restart_cond_rm}) with initial point $X_0=\H_r\lb\At\lb y\rb\rb$ satisfy
\begin{equation}
\ln X_l-X\rn_F\leq \mu^l\ln X_0-X\rn_F,
\end{equation}
where $\mu=\frac{1}{2}\lb\tau_1+\sqrt{\tau_1^2+4\tau_2}\rb<1$.
Moreover, when $\kappa_1=\kappa_2=0$, $\gamma$ in \eqref{eq:gcgiht_dense_gamma_thm} is equal to that in \eqref{eq:gniht_dense_gamma_thm}. On the other hand, we have 
\begin{equation*}
\lim_{R_{2r}, R_{3r}\rightarrow 0} \gamma = 3\kappa_1\kappa_2,
\end{equation*}
So whenever $\kappa_1\kappa_2<1/3$,  $\gamma$ can be less than $1$ if the restricted isometry constants $R_{2r}$ and $R_{3r}$ of the sensing operator are small. In particular, if $\kappa_1=0.1$ and $\kappa_2=1$, 
a sufficient condition for $\gamma<1$ is
{\begin{equation}\label{eq:rcg_ric_cond}
R_{3r}\leq \frac{\sigmin{X}}{\sigmax{X}}\cdot\frac{1}{25\sqrt{r}}.
\end{equation}}
\end{theorem}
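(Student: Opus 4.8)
The plan is to reduce the theorem to the two-step linear recurrence
\[
\|X_{l+1}-X\|_F \;\le\; \tau_1\,\|X_l-X\|_F + \tau_2\,\|X_{l-1}-X\|_F ,
\]
valid at every iteration (whether or not a restart occurs), and then to read off the rate. The positive root of $\mu^2=\tau_1\mu+\tau_2$ is exactly $\mu=\tfrac12(\tau_1+\sqrt{\tau_1^2+4\tau_2})$, and since $\mu\ge\tau_1$ the first step (where $P_{-1}=0$, hence $\beta_0=0$, so it is a pure gradient step) already gives $\|X_1-X\|_F\le\tau_1\|X_0-X\|_F\le\mu\|X_0-X\|_F$; a one-line induction using $\tau_1\mu+\tau_2=\mu^2$ then yields $\|X_l-X\|_F\le\mu^l\|X_0-X\|_F$. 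A short computation shows $\mu<1\iff\tau_1+\tau_2=\gamma<1$, which is the hypothesis. The two-step (rather than one-step) structure is forced by the conjugate term $\beta_l\P_{\S_l}(P_{l-1})$ in $P_l$, which couples iteration $l$ to the residual at iteration $l-1$; this is the sole structural departure from the gradient-descent analysis behind Theorem~\ref{thm:gniht_dense}, and when restarting is forced at every step ($\kappa_1=\kappa_2=0$, so $\beta_l=0$) one has $\varepsilon_\beta=\tau_2=0$ and $\tau_1$ collapses to the $\gamma$ of \eqref{eq:gniht_dense_gamma_thm}.

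For the per-iteration estimate I would first invoke that $X_{l+1}=\H_r(W_l)$ is a best rank-$r$ approximation of $W_l$ while $X$ is rank $r$, so $\|W_l-X_{l+1}\|_F\le\|W_l-X\|_F$ and hence $\|X_{l+1}-X\|_F\le 2\|W_l-X\|_F$ (the origin of the factors of $2$ in $\tau_1,\tau_2$). It then suffices to control
\[
W_l-X=(X_l-X)+\alpha_l\P_{\S_l}(G_l)+\alpha_l\beta_l\P_{\S_l}(P_{l-1}),\qquad G_l=\At\A(X-X_l).
\]
Splitting $X-X_l=[\P_{\S_l}(X)-X_l]+(I-\P_{\S_l})(X)$ and using the explicit form of $\P_{\S_l}$ from \eqref{eq:S_proj}, I would apply the RIC (Definition~\ref{def:RIC}) to replace $\At\A$ by the identity on the matrices involved, up to errors measured by $R_{2r}$ and $R_{3r}$. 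The key structural point is that the tangent space $\S_l$ in \eqref{eq:tangent_space} is a \emph{linear} subspace all of whose elements have rank $\le 2r$, so sums and differences of projected quantities stay rank $\le 2r$ and only $R_{2r}$ is needed there, with $R_{3r}$ entering through cross terms that pair a rank-$\le 2r$ matrix with the rank-$r$ matrix $X$. The out-of-tangent component $(I-\P_{\S_l})(X)=(I-U_lU_l^*)(X-X_l)(I-V_lV_l^*)$ measures how far $X$ lies from $\S_l$; a curvature estimate bounding it through $\|X_l-X\|_F$ and $\sigmin{X}$ produces the term $\tfrac{4R_{2r}}{\sigmin{X}}\|X\|_F$.

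The genuinely new work is to control the stepsize $\alpha_l$ and the conjugate direction via the restart conditions \eqref{eq:restart_cond_rm}. Applying the RIC to the definition of $\beta_l$, the angle condition bounds $|\langle\P_{\S_l}(G_l),\P_{\S_l}(P_{l-1})\rangle|$ by $\kappa_1\|\P_{\S_l}(G_l)\|_F\|\P_{\S_l}(P_{l-1})\|_F$, which forces
\[
|\beta_l|\le\frac{\kappa_1+R_{2r}}{1-R_{2r}}\cdot\frac{\|\P_{\S_l}(G_l)\|_F}{\|\P_{\S_l}(P_{l-1})\|_F}\le\varepsilon_\beta ,
\]
the last inequality using the magnitude condition $\|\P_{\S_l}(G_l)\|_F\le\kappa_2\|\P_{\S_l}(P_{l-1})\|_F$. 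The same polarization estimate applied to the steepest-descent $\alpha_l$ shows it deviates from its ideal (isometric) value by a relative amount captured by $\varepsilon_\alpha$; its denominator $(1-R_{2r})-\kappa_1(1+R_{2r})$ arises as the RIC-controlled lower bound on $\|\A P_l\|_2^2/\|P_l\|_F^2$ once the conjugate cross-term is bounded via the angle condition and the RIC upper bound $1+R_{2r}$. Combining $|\beta_l|\le\varepsilon_\beta$ with $\alpha_{l-1}P_{l-1}=W_{l-1}-X_{l-1}$ and the thresholding bound from step $l-1$ lets me re-express $\|\alpha_l\beta_l\P_{\S_l}(P_{l-1})\|_F$ as a multiple of $\|X_{l-1}-X\|_F$, giving the coefficient $\tau_2=2\varepsilon_\beta(1+\varepsilon_\alpha)(1+R_{2r})$.

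I expect the main obstacle to be precisely this last step: bounding the conjugate contribution $\beta_l\P_{\S_l}(P_{l-1})$ by a multiple of the \emph{previous} error $\|X_{l-1}-X\|_F$ rather than of current quantities, since $P_{l-1}$ was built in the previous tangent space $\S_{l-1}$ and must be compared only after projection onto $\S_l$. The restart conditions are exactly the device that keeps this coupling coefficient small and prevents $\beta_l$ from amplifying the error; without them the recurrence need not contract. Once the recurrence is established, the remaining claims are routine substitutions: $\gamma\to 3\kappa_1\kappa_2$ as $R_{2r},R_{3r}\to 0$, the reduction to \eqref{eq:gniht_dense_gamma_thm} at $\kappa_1=\kappa_2=0$, and the explicit sufficient condition \eqref{eq:rcg_ric_cond}, which follows by inserting $\kappa_1=0.1$, $\kappa_2=1$, using $R_{2r}\le R_{3r}$ and $\|X\|_F\le\sqrt{r}\,\sigmax{X}$ so that $\tfrac{4R_{2r}}{\sigmin{X}}\|X\|_F\le\tfrac{4}{25}$, and checking that the residual $R$-dependent terms are then small enough to keep $\gamma<1$.
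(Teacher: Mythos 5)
Your overall architecture matches the paper's: the factor-of-two thresholding bound, the splitting of $W_l-X$ into a tangent-space contraction term, the curvature term $\lb I-\P_{\S_l}\rb\lb X\rb$, a cross term, and the conjugate-direction term, together with the restart-condition bounds $|\alpha_l-1|\le\varepsilon_\alpha$ and $|\beta_l|\le\varepsilon_\beta$ (these coincide with the paper's Lem.~\ref{lem:gcgiht_dense_alpha_beta}). The gap is in the one step you yourself flag as the main obstacle: converting $\lab\alpha_l\rab\lab\beta_l\rab\ln\P_{\S_l}\lb P_{l-1}\rb\rn_F$ into a multiple of $\ln X_{l-1}-X\rn_F$. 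Your proposed device --- $\alpha_{l-1}P_{l-1}=W_{l-1}-X_{l-1}$ plus ``the thresholding bound from step $l-1$'' --- does not close it. The thresholding property gives $\ln W_{l-1}-X_l\rn_F\le\ln W_{l-1}-X\rn_F$, i.e., it \emph{lower}-bounds $\ln W_{l-1}-X\rn_F$ by $\frac12\ln X_l-X\rn_F$; what you need is an \emph{upper} bound on $\ln W_{l-1}-X_{l-1}\rn_F$ (equivalently on $\ln P_{l-1}\rn_F$) in terms of $\ln X_{l-1}-X\rn_F$. Any such upper bound has to come from the per-iteration analysis at step $l-1$, whose own conjugate term reintroduces $\ln\P_{\S_{l-1}}\lb P_{l-2}\rb\rn_F$; the quantity you are trying to control thus recurses into its own predecessor, and a clean two-term recurrence with the stated $\tau_2$ does not fall out after one step. (The restart conditions give no direct help here either: the second condition bounds the current gradient by the previous direction, not the direction by the gradient.)

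The paper resolves exactly this point by a different mechanism: it unrolls the conjugate term over the entire history,
\begin{equation*}
\beta_l\P_{\S_l}P_{l-1}=\sum_{j=0}^{l-1}\prod_{q=j+1}^{l}\beta_q\prod_{k=j}^{l}\P_{\S_k}\lb G_j\rb ,
\end{equation*}
bounds each summand via $|\beta_q|\le\varepsilon_\beta$ and $\ln\P_{\S_j}\At\A\lb X_j-X\rb\rn_F\le\lb 1+R_{2r}\rb\ln X_j-X\rn_F$, obtaining a full-history inequality with geometrically decaying weights $\varepsilon_\beta^{l-j}$, and then introduces a dominating sequence $c_l$ whose defining identity lets the geometric sum be telescoped into a genuine three-term recurrence $c_{l+1}\le(\varepsilon_l+\varepsilon_\beta)c_l+2\varepsilon_\beta(1+\varepsilon_\alpha)(1+R_{2r})c_{l-1}$; this telescoping is precisely where the extra $+\varepsilon_\beta$ inside $\tau_1$ and the coefficient $\tau_2$ come from, after which Lem.~\ref{lem:gcgiht_recursive} and induction finish the proof. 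If you insist on your more direct route, you would need to strengthen the induction hypothesis to control $\ln P_{l-1}\rn_F$ and $\ln X_{l-1}-X\rn_F$ jointly, and you should expect different (worse) constants than the stated $\tau_1,\tau_2$. A smaller omission: your claimed first-step bound $\ln X_1-X\rn_F\le\tau_1\ln X_0-X\rn_F$, and the appearance of $\frac{4R_{2r}}{\sigmin{X}}\ln X\rn_F$ inside $\tau_1$ at every iteration, both require the initialization estimate $\ln X_0-X\rn_F\le 2R_{2r}\ln X\rn_F$ together with the inductive fact that the errors are non-increasing; this needs to be built into the induction rather than left implicit.
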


\begin{remark}\label{remark1}{\normalfont
 {The RIC conditions in Thms.~\ref{thm:gniht_dense} and \ref{thm:gcgiht_dense} are more stringent than conditions of the form $R_{3r}<c$, where $c>0$ is a universal numerical constant,  but how much stringent is it? Let us consider a random measurement model which satisfies the concentration inequality (II.2) in \cite{candesplan2009oracle}.  Then the proof of Thm.~2.3 in \cite{candesplan2009oracle}
 reveals that \eqref{eq:rgrad_ric_cond} and \eqref{eq:rcg_ric_cond} will hold with high probability if the number of measurements is $O\lb \max(m,n)r\log\lb\frac{\sigmax{X}}{\sigmin{X}}\sqrt{r}\rb\rb$, 
  which implies the sampling complexity is nearly optimal up to  a logarithm factor. } }
\end{remark}
\begin{remark}\label{remark2}{\normalfont
 {As stated previously, the entry sensing operator in matrix completion cannot have a small RIC for any $r>0$. So Thms.~\ref{thm:gniht_dense} and \ref{thm:gcgiht_dense}  cannot be applied to justify the 
 success of the Riemannian gradient descent and conjugate gradient algorithms for matrix completion. Recently, Wei et al. \cite{CGIHT_entry} provided the first recovery guarantees of Algs.~\ref{alg:gniht} and \ref{alg:gcgiht} for matrix completion. In a nutshell,  if the number of known entries is $O(\max(m,n)r^2\log^2(\max(m,n)))$, Algs.~\ref{alg:gniht} and \ref{alg:gcgiht} with good initial guess are able to recover an incoherent low rank matrix with high probability. }}
\end{remark}
 \begin{remark}\label{remark3}{\normalfont
 If we first run $O(\log r)+O(\log(\sigma_{\max}(X)/\sigma_{\min}(X)))$ iterations of Alg.~\ref{alg:niht} until 
\begin{equation*}
\frac{2}{\sigmin{X}}\ln X_l-X\rn_F <1,
\end{equation*}
and then switch to Alg.~\ref{alg:gniht}, we have  
\begin{align*}
\frac{4R_{2r}+2R_{3r}}{1-R_{2r}}+\frac{2}{\sigmin{X}}\ln X_l-X\rn_F < 1
\end{align*}
 if $R_{3r}<c$ for a sufficiently small numerical constant $c>0$.
So it follows from \eqref{eq:recursive_gniht_dense_null} that the sufficient condition for  successful recovery of Alg.~\ref{alg:gniht} can be reduced to $R_{3r}<c$. Similar initialization scheme can be applied to Alg.~\ref{alg:gcgiht}. However, this is not advocated as it is difficult to determine the switching point in practice.
 }
 \end{remark}
  \begin{remark}\label{remark4}{\normalfont
 In Sec.~\ref{subsec:subspace_selection}, we have noted that if the subspace $\S_l$ is selected to be the column space or the row space of $X_l$, Algs.~\ref{alg:gniht} and \ref{alg:gcgiht} will not work. Here it is worth pointing out why the proofs for Thms.~\ref{thm:gniht_dense} and \ref{thm:gcgiht_dense} no longer hold if the tangent space is replaced by either the column space or the row space of the current iterate. The key to the proofs of Thms.~\ref{thm:gniht_dense} and \ref{thm:gcgiht_dense} is that $\ln \lb I-\P_{\S_l}\rb\lb X_l-X\rb\rn_F=o\lb\ln X_l-X\rn_F\rb$. However, if for example $\S_l$ is selected to be the column space of $X_l$, then 
\begin{eqnarray*}
\ln \lb I-\P_{\S_l}\rb\lb X_l-X\rb\rn_F &=&\ln \lb I-\P_{U_l}\rb X \rn_F\\
&=&\ln\lb UU^*-U_lU_l^*\rb X\rn_F\leq \ln UU^*-U_lU_l^*\rn_F\ln X\rn_2\\
&\leq&\frac{\sqrt{2}\ln X_l-X\rn_F\ln X\rn_2}{\sigmin{X}},
\end{eqnarray*}
where last inequality follows from Lem.~\ref{lem:for_proj_error}. This implies $\ln \lb I-\P_{\S_l}\rb\lb X_l-X\rb\rn_F$ is no longer a lower order of $\ln X_l-X\rn_F$ when $\S_l$
is the column space of $X_l$.}
\end{remark}
  \begin{remark}\label{remark5}{\normalfont
 {There has been a growing interest in investigating recovery guarantees of fast non-convex algorithms for both low rank matrix recovery  \cite{procrstes_flow,chenwain_fast,zhenglaff,saolre,bhokyrsan,zhaowangliu,whitesangward,JaNeSa2012ammc} and matrix completion \cite{CGIHT_entry,rhkamso,rhkamso_noisy,sunluo,kes_thesis, JaNeSa2012ammc,JaNe2015fast}. We compare the results in Thms.~\ref{thm:gniht_dense} and \ref{thm:gcgiht_dense} with those in \cite{JaNeSa2012ammc, procrstes_flow} where theoretical guarantees are established for other recovery algorithms using restricted isometry constant, and indirect comparisons can be made from them.  
It has been proven in \cite{JaNeSa2012ammc} that if $R_{2r}\leq \frac{\sigminsq{X}}{\sigmaxsq{X}}\cdot \frac{1}{100 r}$, alternating minimization initialized by one step hard thresholding is guaranteed to recover the underlying low rank matrix. This result is similar to recovery guarantees in Thms.~\ref{thm:gniht_dense} and \ref{thm:gcgiht_dense} if interpreted in terms of the sampling complexity (see Remark~\ref{remark1}). The gradient descent algorithm based on the product factorization of low rank matrices is shown to be able to converge linearly to the measured rank $r$ matrix if $R_{6r}<\frac{1}{10}$ and the algorithm is initialized by running (N)IHT for  a logarithm number of iterations \cite{ procrstes_flow}.  Remark~\ref{remark3} shows that this is also true for the Riemannian gradient descent and conjugate gradient descent algorithms.
For theoretical comparisons between the algorithms discussed in this paper and other algorithms in literature on matrix completion, we refer the reader to \cite{CGIHT_entry}.
}
}
\end{remark}

\section{Numerical Experiments}\label{sec:numerics}
In this section, we present empirical observations of the Riemannian gradient descent and conjugate gradient descent algorithms.  The numerical experiments 
are conducted on a Mac Pro laptop with 2.5GHz quad-core Intel Core i7 CPUs and 16 GB memory
and executed from Matlab 2014b. The tests presented in this section focus on square
matrices as is typical in the literature.
\subsection{Empirical Phase Transition}
A central question in matrix recovery is that given a triple $(m,n,r)$ how many of measurements are needed in order for an 
algorithm to be able to reliably recover a low rank matrix.  Though the theoretical results in Thms.~\ref{thm:gniht_dense} and \ref{thm:gcgiht_dense} can provide sufficient conditions for recovery, they are typically pessimistic when compared with the empirical observations. In practice, 
we evaluate the {recover} ability of an algorithm in  the phase transition framework, which compares the number of measurements, $p$, the size of an $m\times n$ matrix, 
$mn$, and the minimum number of measurements required to recover a rank $r$ matrix, $(m+n-r)r$, through the {\em undersampling} and {\em oversampling} ratios
\begin{equation}\label{eq:delta_rho}
\delta=\frac{p}{mn},\quad\rho=\frac{(m+n-r)r}{p}.
\end{equation}
The phase transition curve separates the $(\delta,\rho)\in[0,1]^2$ plane into two regions.
For problem instances with $(\delta,\rho)$ below the phase transition curve, the algorithm
is observed to be able to converge to the measured matrix. On the other hand, for problem instances
with $(\delta,\rho)$ above the phase transition curve, the algorithm is observed to return a solution that does not 
match the measured matrix.

Recall that in matrix recovery a linear operator consists of  a number of measurement matrices, each of which returns  
a measurement by taking inner product with the measured matrix. Though the model of entry sensing does not satisfy the 
RIC condition, the algorithms work equally well for it. So we conduct tests on the following two representative sensing 
operators: 
\begin{itemize}
\item $\G$: each entry of the sensing matrix is sampled from the standard Gaussian distribution $\N(0,1)$;
\item $\E$: a subset of entries of the measured matrix are sampled uniformly at random.
\end{itemize}
The test rank $r$ matrix $X\in\R^{m\times n}$ is formed as the product of two random rank $r$ matrices; that is $X=LR$, where 
$L\in\R^{m\times r}$ and $R\in\R^{r\times n}$ with $L$ and $R$ having their entries sampled from the standard 
Gaussian distribution $\N(0,1)$. In the tests, an algorithm is considered to have successfully recovered the rank $r$ matrix $X$
if it returns a matrix $X_l$ which satisfies 
\begin{equation*}
\frac{\ln X_l-X\rn_F}{\ln X\rn_F}\leq 10^{-2}.
\end{equation*}

We present the empirical phase transition of the Riemannian gradient descent algorithm and the Riemannian conjugate gradient
descent algorithms with and without restarting. In the restarted Riemannian conjugate gradient algorithm, we set { $\kappa_1=0.1$ and $\kappa_2=1$}. 
The tests are conducted with the undersampling ratio $\delta=p/mn$ taking 
$18$ equispaced values from $0.1$ to $0.95$. For Gaussian sensing, we conduct tests with $m=n=80$ while for entry sensing 
$m=n=800$. For each triple $(m,n,p)$, we start from a rank $r$ that is sufficiently small so that the algorithm can recover all the test 
matrices in {\em ten} random tests.\footnote{A larger number of random tests  have been conducted for a subset of problems. It was observed that
the entries in Tabs.~\ref{table:phase_gaussian} and \ref{table:phase_entry} didn't change significantly.} Then we increase the rank by $1$ until it reaches a value such that the algorithm fails to recover 
each of the {\em ten} test matrices. We refer to the largest rank that the algorithm succeeds in recovering  all the test matrices as $r_{\min}$, 
and the smallest rank that the algorithm fails all the tests as $r_{\max}$. The values of $r_{min}$, $r_{max}$ and the associated $\rho_{min}$,
$\rho_{max}$ computed through \eqref{eq:delta_rho} are listed in Tab.~\ref{table:phase_gaussian} for Gaussian sensing and in Tab.~\ref{table:phase_entry} for entry sensing. Figure~\ref{phase_mc} presents the average empirical phase transition curves of 
the tested algorithms on the $(\delta,\rho)$ plane, where we use $r=(r_{\min}+r_{\max})/2$ to compute the oversampling ratio $\rho$.
\begin{table}[htp]
\footnotesize
\centering
\caption{Phase transition table for Gaussian sensing with $m=n=80$. For each $(m,n,p)$ with $p=\delta\cdot mn$, the algorithm 
can recover all of the ten random test matrices when $r\leq r_{\min}$, but fails to recover each of the randomly drawn matrices when $r\geq r_{\max}$.}\label{table:phase_gaussian}
\vspace{0.2cm}
\begin{tabular}{|c|cccc|cccc|cccc|}
\hline
& \multicolumn{4}{|c|}{RGrad} & \multicolumn{4}{|c|}{RCG} & \multicolumn{4}{|c|}{RCG restarted}\\
\hline
$\delta$ & $r_{\min}$ & $r_{\max}$ & $\rho_{\min} $ & $\rho_{\max}$& $r_{\min}$ & $r_{\max}$ & $\rho_{\min} $ & $\rho_{\max}$& $r_{\min}$ & $r_{\max}$ & $\rho_{\min} $ & $\rho_{\max}$\\
\hline
0.1 & 3 & 4 & 0.74 & 0.97 & 3 & 4 & 0.74 & 0.97 & 3 & 4 & 0.74 & 0.97 \\
\hline
0.15 & 4 & 6 & 0.65 & 0.96 & 4 & 6 & 0.65 & 0.96 & 4 & 6 & 0.65 & 0.96 \\
\hline
0.2 & 6 & 8 & 0.72 & 0.95 & 6 & 8 & 0.72 & 0.95 & 6 & 8 & 0.72 & 0.95 \\
\hline
0.25 & 8 & 10 & 0.76 & 0.94 & 8 & 10 & 0.76 & 0.94 & 8 & 10 & 0.76 & 0.94 \\
\hline
0.3 & 11 & 12 & 0.85 & 0.93 & 11 & 13 & 0.85 & 1 & 11 & 13 & 0.85 & 1 \\
\hline
0.35 & 12 & 15 & 0.79 & 0.97 & 12 & 15 & 0.79 & 0.97 & 11 & 15 & 0.73 & 0.97 \\
\hline
0.4 & 14 & 17 & 0.8 & 0.95 & 14 & 17 & 0.8 & 0.95 & 14 & 17 & 0.8 & 0.95 \\
\hline
0.45 & 17 & 19 & 0.84 & 0.93 & 17 & 19 & 0.84 & 0.93 & 17 & 19 & 0.84 & 0.93 \\
\hline
0.5 & 20 & 22 & 0.88 & 0.95 & 20 & 22 & 0.88 & 0.95 & 20 & 22 & 0.88 & 0.95 \\
\hline
0.55 & 22 & 24 & 0.86 & 0.93 & 22 & 24 & 0.86 & 0.93 & 22 & 24 & 0.86 & 0.93 \\
\hline
0.6 & 25 & 27 & 0.88 & 0.94 & 26 & 28 & 0.91 & 0.96 & 26 & 28 & 0.91 & 0.96 \\
\hline
0.65 & 28 & 30 & 0.89 & 0.94 & 28 & 32 & 0.89 & 0.98 & 28 & 32 & 0.89 & 0.98 \\
\hline
0.7 & 31 & 33 & 0.89 & 0.94 & 31 & 35 & 0.89 & 0.98 & 31 & 35 & 0.89 & 0.98 \\
\hline
0.75 & 34 & 36 & 0.89 & 0.93 & 35 & 38 & 0.91 & 0.97 & 35 & 38 & 0.91 & 0.97 \\
\hline
0.8 & 38 & 40 & 0.91 & 0.94 & 40 & 42 & 0.94 & 0.97 & 40 & 42 & 0.94 & 0.97 \\
\hline
0.85 & 42 & 44 & 0.91 & 0.94 & 44 & 47 & 0.94 & 0.98 & 44 & 47 & 0.94 & 0.98 \\
\hline
0.9 & 47 & 50 & 0.92 & 0.95 & 50 & 53 & 0.95 & 0.98 & 50 & 53 & 0.95 & 0.98 \\
\hline
0.95 & 52 & 54 & 0.92 & 0.94 & 57 & 61 & 0.97 & 0.99 & 57 & 61 & 0.97 & 0.99 \\
\hline\end{tabular}\end{table}

Tables~\ref{table:phase_gaussian}, \ref{table:phase_entry} and Fig.~\ref{phase_mc} show that all the three tested algorithms are 
able to recover a rank $r$ matrix from $p=C\cdot (m+n-r)r$ number of measurements with $C$ being slightly larger than $1$. The ability 
of reconstructing a low rank matrix from nearly the minimum number of measurements has been previously reported in \cite{tw2012nihtmc, tannerwei_asd, CGIHT}
for other algorithms on low rank matrix recovery and matrix completion. 
To be more precise, Fig.~\ref{phase_mc} shows that the phase transition curves of RCG and RCG restarted are almost indistinguishable 
to each other, which shows the effectiveness of our restarting conditions. For Gaussian sensing, the phase transition curves of RCG and RCG 
restarted are slightly higher than that of RGrad when $\delta\geq 0.6$, while for entry sensing RGrad has a slightly higher phase transition curve when 
$\delta$ is small. Despite that, Tabs.~\ref{table:phase_gaussian} and \ref{table:phase_entry} show that their recovery performance only differs by 
one or two ranks. The erratic behavior of the phase transition curves for Gaussian sensing is due to the small value of $m=n=80$ and associated 
large changes in $\rho$ for a rank one change.

\begin{table}[htp]
\footnotesize
\centering
\caption{Phase transition table for entry sensing with $m=n=800$. For each $(m,n,p)$ with $p=\delta\cdot mn$, the algorithm can recover all of the ten random test matrices when $r\leq r_{\min}$, but failes to recover each of the randomly drawn matrices when $r\geq r_{\max}$.}\label{table:phase_entry}
\vspace{0.2cm}

\begin{tabular}{|c|cccc|cccc|cccc|}
\hline
& \multicolumn{4}{|c|}{RGrad} & \multicolumn{4}{|c|}{RCG} & \multicolumn{4}{|c|}{RCG restarted}\\
\hline
$\delta$ & $r_{\min}$ & $r_{\max}$ & $\rho_{\min} $ & $\rho_{\max}$& $r_{\min}$ & $r_{\max}$ & $\rho_{\min} $ & $\rho_{\max}$& $r_{\min}$ & $r_{\max}$ & $\rho_{\min} $ & $\rho_{\max}$\\
\hline
0.1 & 36 & 38 & 0.88 & 0.93 & 35 & 37 & 0.86 & 0.9 & 36 & 37 & 0.88 & 0.9 \\
\hline
0.15 & 55 & 59 & 0.89 & 0.95 & 55 & 57 & 0.89 & 0.92 & 55 & 57 & 0.89 & 0.92 \\
\hline
0.2 & 76 & 78 & 0.9 & 0.93 & 74 & 77 & 0.88 & 0.92 & 74 & 77 & 0.88 & 0.92 \\
\hline
0.25 & 97 & 99 & 0.91 & 0.93 & 96 & 98 & 0.9 & 0.92 & 96 & 98 & 0.9 & 0.92 \\
\hline
0.3 & 119 & 121 & 0.92 & 0.93 & 117 & 119 & 0.9 & 0.92 & 117 & 119 & 0.9 & 0.92 \\
\hline
0.35 & 142 & 143 & 0.92 & 0.93 & 140 & 142 & 0.91 & 0.92 & 140 & 142 & 0.91 & 0.92 \\
\hline
0.4 & 166 & 167 & 0.93 & 0.93 & 163 & 166 & 0.91 & 0.93 & 163 & 166 & 0.91 & 0.93 \\
\hline
0.45 & 190 & 192 & 0.93 & 0.94 & 188 & 191 & 0.92 & 0.93 & 188 & 191 & 0.92 & 0.93 \\
\hline
0.5 & 217 & 219 & 0.94 & 0.95 & 214 & 217 & 0.93 & 0.94 & 214 & 217 & 0.93 & 0.94 \\
\hline
0.55 & 244 & 248 & 0.94 & 0.95 & 242 & 246 & 0.93 & 0.95 & 242 & 245 & 0.93 & 0.94 \\
\hline
0.6 & 274 & 276 & 0.95 & 0.95 & 272 & 274 & 0.94 & 0.95 & 272 & 274 & 0.94 & 0.95 \\
\hline
0.65 & 306 & 308 & 0.95 & 0.96 & 302 & 306 & 0.94 & 0.95 & 304 & 306 & 0.95 & 0.95 \\
\hline
0.7 & 340 & 343 & 0.96 & 0.96 & 338 & 340 & 0.95 & 0.96 & 338 & 340 & 0.95 & 0.96 \\
\hline
0.75 & 378 & 380 & 0.96 & 0.97 & 374 & 378 & 0.96 & 0.96 & 374 & 378 & 0.96 & 0.96 \\
\hline
0.8 & 418 & 422 & 0.96 & 0.97 & 416 & 420 & 0.96 & 0.97 & 416 & 420 & 0.96 & 0.97 \\
\hline
0.85 & 466 & 470 & 0.97 & 0.98 & 464 & 468 & 0.97 & 0.97 & 464 & 468 & 0.97 & 0.97 \\
\hline
0.9 & 524 & 527 & 0.98 & 0.98 & 522 & 526 & 0.98 & 0.98 & 522 & 526 & 0.98 & 0.98 \\
\hline
0.95 & 600 & 604 & 0.99 & 0.99 & 600 & 604 & 0.99 & 0.99 & 600 & 604 & 0.99 & 0.99 \\
\hline\end{tabular}\end{table}

\begin{figure}
\centering

\end{figure}
\begin{figure}[htp]
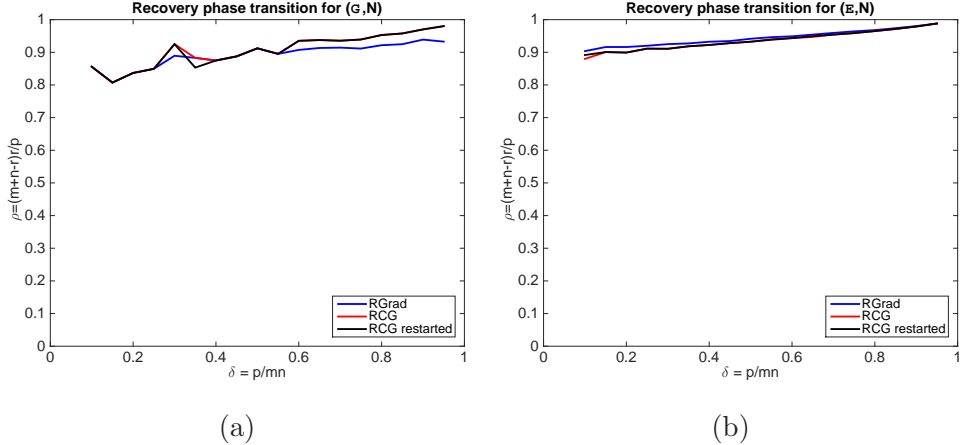

\centering
\begin{tabular}{cc} 
\vspace{0.1in}
\includegraphics[scale=.35]{joint_phase_transion_plots_gaussian.eps}  &
\includegraphics[scale=.35]{joint_phase_transion_plots_entry.eps} \\
\vspace{0.1in}
(a)&(b) 
\end{tabular}
\vspace{-.2in}
\caption{Empirical phase transition curves for matrix recovery algorithms: RGrad, RCG, RCG started. Horizontal axis $\delta$
and vertical axis $\rho$ as defined in \eqref{eq:delta_rho}. (a) $\G$ with $m = n =80$, (b)
$\E$ with $m=n=800$.}\label{phase_mc}
\end{figure}

\subsection{Computation Time}
\begin{figure}[htp]
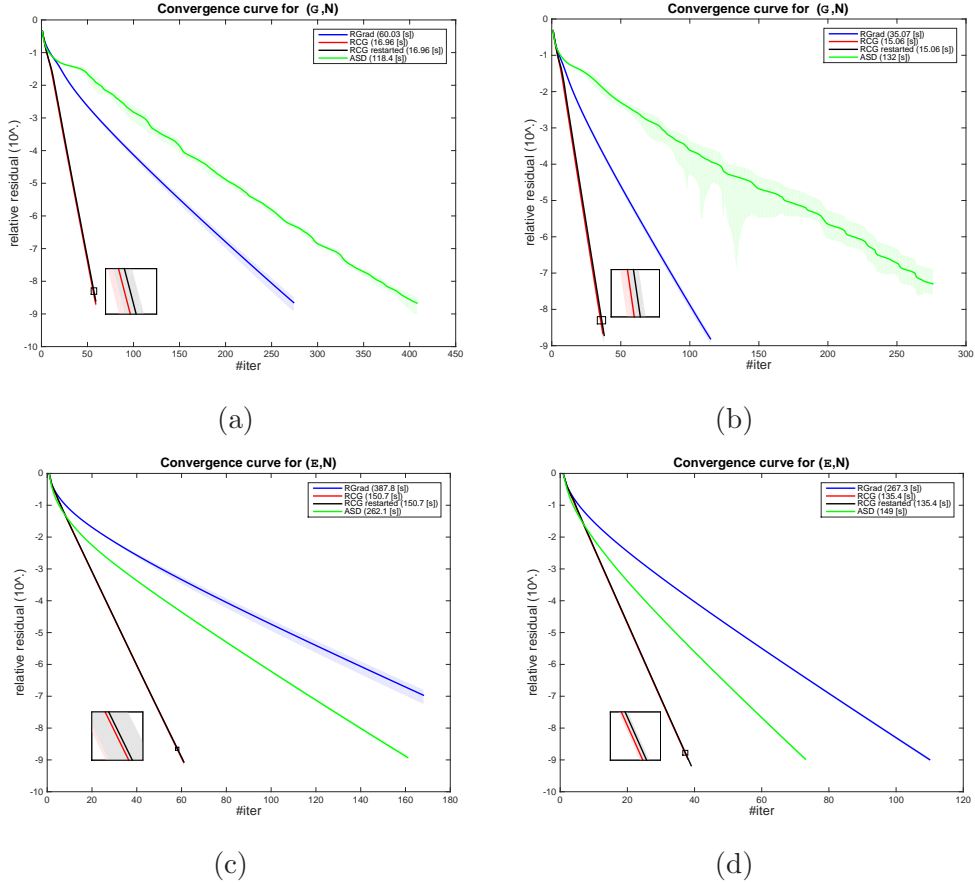

\centering
\begin{tabular}{cc}
\vspace{0.1in}
\includegraphics[scale=.35]{iter_res_os2gaussian.eps}  &\quad
\includegraphics[scale=.35]{iter_res_os3gaussian.eps} \\
\vspace{0.1in}
(a) & (b)\\
\vspace{0.1in}
\includegraphics[scale=.35]{iter_res_os2entry.eps}  &\quad
\includegraphics[scale=.35]{iter_res_os3entry.eps} \\
(c) & (d)
\end{tabular}
\caption{Relative residual (mean and standard deviation over ten random tests) as function of number of iterations for Gaussian sensing ((a) and (b)) and entry sensing ((c) and (d)). In Gaussian sensing, $m=n=80$, $r=10$, $1/\rho=2$ (a) or $1/\rho=3$ (b). In entry sensing, $m=n=8000$, $r=100$, $1/\rho=2$ (c) or $1/\rho=3$ (d). The values after each algorithm are the average computational time (seconds) for convergence.}\label{fig:comput}
\end{figure}
Many algorithms have been designed for the matrix recovery problem, for example \cite{CGIHT, jmd2010svp,cevher2012matrixrecipes,tw2012nihtmc,leebresler2011admira,haldahernando2009pf, wyz2012lmafit, tannerwei_asd,bart2012riemannian,ngosaad2012scgrassmc,mishra_geometry,mmbs2012fixrank}, just to name a few. Exhaustive 
comparisons with all those algorithms are impossible. In this section, we will compare RGrad, RCG, RCG restarted with the alternating steepest descend (ASD)
method developed in \cite{tannerwei_asd}.\footnote{We do not compare the Riemannian gradient descent and conjugate gradient descent algorithms (Algs.~\ref{alg:gniht} and \ref{alg:gcgiht}) with NIHT and CGIHT (Algs.~\ref{alg:niht} and \ref{alg:cgiht}) as the superiority of the Riemannian optimization algorithms is very clear following from the discussions in Sec.~\ref{sec:iht}.}  
 ASD takes advantage of the product factorization of low rank matrices and minimizes the bi-quadratic function 
\begin{equation*}f(L,R)=\frac{1}{2}\ln y-\A(LR)\rn_2^2\end{equation*} alternatively 
with $L$ and $R$, where $L\in\R^{m\times r}$ and $R\in\R^{r\times n}$. In each iteration of ASD, it applies  a step of steepest gradient descent on one factor matrix while the other one is held fixed.  The efficiency of ASD has been 
reported in \cite{tannerwei_asd} for  its low per iteration computational complexity. Indirect comparisons with other algorithms can be made from \cite{tannerwei_asd} and references therein.

We compare the algorithms on both Gaussian sensing and entry sensing. For Gaussian sensing, the tests are conducted for $m=n=80$, $r=10$ and 
$1/\rho\in\{2,3\}$; and for entry sensing, the tests are conducted for $m=n=8000$, $r=100$ and $1/\rho\in\{2,3\}$. The algorithms are terminated when the relative residual is less than $10^{-9}$. The relative residual plotted against the number of iterations is presented in Fig.~\ref{fig:comput}. First it can be observed that the convergence curves for RCG and RCG restarted are almost indistinguishable, differing only in one or two iterations, which again shows the effectiveness of the restarting conditions. A close look at the computational results reveals that restarting usually occurs in the first few iterations for RCG restarted.  Moreover, RCG and RCG restarted are sufficiently faster than RGrad and ASD both in terms of the convergence rate and in terms of the average computation time.

\section{Proofs of Main Results}\label{sec:analysis}
\subsection{A Key Lemma}
The following lemma will be used repeatedly, which contains the second order information of 
the smooth low rank matrix manifold, see Fig.~\ref{fig:curve}.

\begin{figure}[htp]
\centering
\includegraphics[scale=0.45, trim = 0 0 0 0, clip]{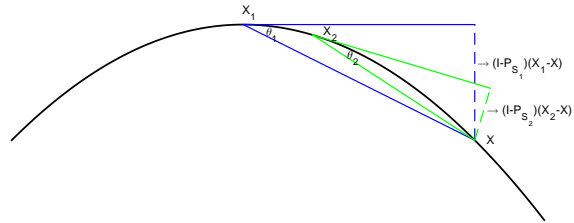}
\caption{The set of low rank matrices  forms a continuous manifold (denoted by a curve here) in the ambient  space. From the plot, it is clear that $\ln\lb I-\P_{\S_1}\rb\lb X_1-X\rb\rn_F=\ln X_1-X\rn_F\cdot\sin\theta_1$ and 
$\ln\lb I-\P_{\S_2}\rb\lb X_2-X\rb\rn_F=\ln X_2-X\rn_F\cdot\sin\theta_2$. As $X_l$ approaches $X$ ($l=1,2$), the angle also decreases. So $\ln\lb I-\P_{\S_l}\rb\lb X_l-X\rb\rn_F=o(\ln X_l-X\rn_F)$.}\label{fig:curve}
\end{figure}

\begin{lemma}\label{lem:proj_error}
Let $X_l=U_l\Sigma_l V_l$ be a rank $r$ matrix, and $\S_l$ be the tangent space of the rank $r$ matrix manifold 
at $X_l$. Let $X$ be another rank $r$ matrix. Then 
{\begin{align*}
\ln \lb I-\P_{\S_l}\rb X\rn_F&\leq \frac{1}{\sigmin{X}}\ln X_l-X\rn_2\ln X_l-X\rn_F\numberthis\label{eq:proj_error_2}\\
&\leq  \frac{1}{\sigmin{X}}\ln X_l-X\rn_F^2.\numberthis\label{eq:proj_error_F}
\end{align*}}
\end{lemma}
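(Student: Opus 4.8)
The plan is to reduce the projection onto the orthogonal complement of $\S_l$ to a product of three matrices and then to exploit the invertibility of the singular value matrix of $X$. First I would use the explicit projection formula \eqref{eq:S_proj}: writing $\P_{U_l}=U_lU_l^*$ and $\P_{V_l}=V_lV_l^*$, a direct rearrangement gives
\begin{equation*}
\lb I-\P_{\S_l}\rb Z = \lb I-\P_{U_l}\rb Z\lb I-\P_{V_l}\rb
\end{equation*}
for every $Z\in\R^{m\times n}$, so in particular $\lb I-\P_{\S_l}\rb X = \lb I-\P_{U_l}\rb X\lb I-\P_{V_l}\rb$. Let $X=U\Sigma V^*$ be the reduced SVD of $X$, with $\Sigma\in\R^{r\times r}$ invertible and $\ln\Sigma^{-1}\rn_2=1/\sigmin{X}$. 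Substituting $X=U\Sigma V^*$ and setting $A=\lb I-\P_{U_l}\rb U$ and $B=\lb I-\P_{V_l}\rb V$, I obtain the factorization $\lb I-\P_{\S_l}\rb X = A\Sigma B^*$.

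The crucial point is that both outer factors are of order $\ln X_l-X\rn$, and $\Sigma^{-1}$ supplies the $\sigmin{X}$ in the denominator. Since the columns of $X_l$ lie in the range of $U_l$, we have $\lb I-\P_{U_l}\rb X_l=0$, hence $A\Sigma = \lb I-\P_{U_l}\rb XV = \lb I-\P_{U_l}\rb\lb X-X_l\rb V$, which yields $\ln A\Sigma\rn_2\leq \ln X_l-X\rn_2$ because $U_l$ and $V$ have orthonormal columns. Writing $A=\lb A\Sigma\rb\Sigma^{-1}$ then gives $\ln A\rn_2\leq \ln X_l-X\rn_2/\sigmin{X}$. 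Symmetrically, $X_l\lb I-\P_{V_l}\rb=0$ gives $U\Sigma B^* = \lb X-X_l\rb\lb I-\P_{V_l}\rb$, so that $\ln\Sigma B^*\rn_F = \ln U\Sigma B^*\rn_F\leq \ln X_l-X\rn_F$.

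Finally I would combine the two estimates through the submultiplicative bound $\ln A\Sigma B^*\rn_F\leq \ln A\rn_2\ln\Sigma B^*\rn_F$, which produces
\begin{equation*}
\ln\lb I-\P_{\S_l}\rb X\rn_F = \ln A\Sigma B^*\rn_F\leq \frac{1}{\sigmin{X}}\ln X_l-X\rn_2\ln X_l-X\rn_F,
\end{equation*}
establishing \eqref{eq:proj_error_2}; then \eqref{eq:proj_error_F} follows at once from $\ln X_l-X\rn_2\leq\ln X_l-X\rn_F$.

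The main conceptual obstacle is seeing why a quadratic bound is available at all: the naive estimate $\lb I-\P_{\S_l}\rb X = \lb I-\P_{U_l}\rb\lb X-X_l\rb\lb I-\P_{V_l}\rb$ only gives the linear bound $\ln X_l-X\rn_F$. The gain comes from \emph{not} collapsing both projections onto the single difference $X-X_l$, but instead keeping the factorization $A\Sigma B^*$ and distributing one power of $\ln X_l-X\rn$ to each outer factor, while the invertibility of $\Sigma$ (the source of $\sigmin{X}$) converts the bound on $A\Sigma$ into a bound on $A$ itself. Checking that $U_l,V_l,U,V$ all have orthonormal columns, so that the unitary factors drop out of the spectral and Frobenius norms, is the only routine bookkeeping.
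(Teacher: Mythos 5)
Your proof is correct, and it takes a cleaner, more self-contained route than the paper. The paper starts from $\P_{\S}\lb X\rb=X$ (with $\S$ the tangent space at $X$), expands $\lb I-\P_{\S_l}\rb X=\lb\P_{\S}-\P_{\S_l}\rb X$ through several lines of algebra into $\lb UU^*-U_lU_l^*\rb\lb X-X_l\rb\lb I-V_lV_l^*\rb$, and then invokes a separate subspace-perturbation lemma (Lem.~\ref{lem:for_proj_error}, proved in the appendix) to bound $\ln UU^*-U_lU_l^*\rn_2$ by $\ln X_l-X\rn_2/\sigmin{X}$. You instead use the identity $\lb I-\P_{\S_l}\rb Z=\lb I-\P_{U_l}\rb Z\lb I-\P_{V_l}\rb$, which follows in one line from \eqref{eq:S_proj}, factor the result as $A\Sigma B^*$, and derive both norm bounds inline: the $\Sigma^{-1}$-insertion trick giving $\ln A\rn_2\leq\ln X_l-X\rn_2/\sigmin{X}$, and the annihilation $X_l\lb I-\P_{V_l}\rb=0$ giving $\ln \Sigma B^*\rn_F\leq\ln X_l-X\rn_F$. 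The underlying mechanism is identical in both proofs — distribute one power of the error to each outer factor, with the invertibility of $\Sigma$ supplying $\sigmin{X}$ — and indeed your bound on $\ln A\rn_2$ is essentially the appendix proof of Lem.~\ref{lem:for_proj_error} in disguise. What your route buys is a proof of Lem.~\ref{lem:proj_error} that needs no auxiliary lemma; what the paper's route buys is that Lem.~\ref{lem:for_proj_error} is stated once as a standalone result and then reused elsewhere (e.g., in Remark~\ref{remark4}, where the Frobenius-norm version \eqref{eq:for_proj_error_02} is needed), so the paper's organization amortizes that lemma across multiple uses.
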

The proof of Lem.~\ref{lem:proj_error} relies on the following result which bounds the projection distance of the singular vector subspaces of two matrices. 
\begin{lemma}\label{lem:for_proj_error}
Let $X_l=U_l\Sigma_l V_l^*$ and $X=U\Sigma V^*$ be two rank $r$ matrices. Then 
\begin{equation}\label{eq:for_proj_error_01}
\ln U_lU_l^*-UU^*\rn_2\leq\frac{\ln X_l-X\rn_2}{\sigmin{X}}\quad\mbox{and}\quad\ln V_lV_l^*-VV^*\rn_2\leq\frac{\ln X_l-X\rn_2}{\sigmin{X}};
\end{equation}
\begin{equation}\label{eq:for_proj_error_02}
\ln U_lU_l^*-UU^*\rn_F\leq\frac{\sqrt{2}\ln X_l-X\rn_F}{\sigmin{X}}\quad\mbox{and}\quad\ln V_lV_l^*-VV^*\rn_F\leq\frac{\sqrt{2}\ln X_l-X\rn_F}{\sigmin{X}}.
\end{equation}
\end{lemma}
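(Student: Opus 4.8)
The plan is to first reduce the two–sided projector distance $\ln U_lU_l^*-UU^*\rn$ to the one–sided distance $\ln\lb I-U_lU_l^*\rb U\rn$, and then to bound the latter directly by exploiting that the column space of $X_l$ is contained in $\mathrm{range}(U_l)$. Throughout I write $P=UU^*$ and $Q=U_lU_l^*$ for the orthogonal projectors onto the (equal–dimensional, rank $r$) column spaces of $X$ and $X_l$, and I note that $\sigmin{X}$ equals the smallest singular value of $\Sigma$.

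For the direct bound I would start from $\lb I-Q\rb X_l=\lb I-U_lU_l^*\rb U_l\Sigma_l V_l^*=0$, so that
\[
\lb I-Q\rb U\Sigma V^* = \lb I-Q\rb X = \lb I-Q\rb\lb X-X_l\rb.
\]
Taking spectral norms and using that right multiplication by $V^*$ (orthonormal rows) preserves $\ln\cdot\rn_2$, while $\ln A\Sigma\rn_2\geq\sigmin{X}\ln A\rn_2$ because $\ln A\rn_2\le\ln A\Sigma\rn_2\ln\Sigma^{-1}\rn_2$, gives
\[
\sigmin{X}\,\ln\lb I-Q\rb U\rn_2\leq\ln\lb I-Q\rb\lb X-X_l\rb\rn_2\leq\ln X_l-X\rn_2.
\]
The identical computation in the Frobenius norm (using $\ln MV^*\rn_F=\ln M\rn_F$ and $\ln A\Sigma\rn_F\ge\sigmin{X}\ln A\rn_F$) yields $\sigmin{X}\ln\lb I-Q\rb U\rn_F\le\ln X_l-X\rn_F$.

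It remains to relate $\ln\lb I-Q\rb U\rn$ to $\ln P-Q\rn$. Here I would invoke the principal angles $\theta_1,\dots,\theta_r$ between $\mathrm{range}(U)$ and $\mathrm{range}(U_l)$, whose cosines are the singular values of $U_l^*U$. Since $I-Q$ is a projector, $\ln\lb I-Q\rb U\rn_2^2=\ln I-\lb U_l^*U\rb^*\lb U_l^*U\rb\rn_2=\max_i\sin^2\theta_i$, so $\ln\lb I-Q\rb U\rn_2=\sin\theta_{\max}$, while the eigenvalues of $P-Q$ are $\{\pm\sin\theta_i\}$ together with zeros, so $\ln P-Q\rn_2=\sin\theta_{\max}$ as well; hence $\ln U_lU_l^*-UU^*\rn_2=\ln\lb I-Q\rb U\rn_2$. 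The Frobenius version reads $\ln\lb I-Q\rb U\rn_F^2=\sum_i\sin^2\theta_i$ whereas $\ln P-Q\rn_F^2=2\sum_i\sin^2\theta_i$, producing exactly the factor $\sqrt2$ in \eqref{eq:for_proj_error_02}. Combining with the previous paragraph proves both inequalities for the $U$–subspaces, and the $V$–subspace bounds follow verbatim by applying the argument to $X^*=V\Sigma U^*$ and $X_l^*=V_l\Sigma_l U_l^*$, for which $\sigmin{X^*}=\sigmin{X}$ and $\ln X^*-X_l^*\rn=\ln X-X_l\rn$.

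The main obstacle is the projector identity $\ln P-Q\rn_2=\ln\lb I-Q\rb U\rn_2$ and its Frobenius analogue with the $\sqrt2$: the elementary direction $\ln\lb I-Q\rb U\rn_2\le\ln P-Q\rn_2$ is immediate from $\lb I-Q\rb P=\lb P-Q\rb P$, but the reverse inequality — and hence equality — genuinely uses that the two subspaces share the same dimension $r$, and is cleanest to establish through the CS/principal–angle decomposition of the pair $(P,Q)$. Everything else is routine norm manipulation.
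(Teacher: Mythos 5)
Your proposal is correct, and its load-bearing half coincides exactly with the paper's own argument: both proofs exploit $\lb I-U_lU_l^*\rb X_l=0$ to write $\lb I-U_lU_l^*\rb X=\lb I-U_lU_l^*\rb\lb X-X_l\rb$ and then extract the factor $1/\sigmin{X}$ through $\Sigma^{-1}$ (you multiply by $\Sigma V^*$ and invert via $\ln A\rn\leq\ln A\Sigma\rn\ln\Sigma^{-1}\rn_2$; the paper writes $UU^*=XV\Sigma^{-1}U^*$ and peels the factors off by submultiplicativity --- the same manipulation in the opposite order). Where you genuinely differ is in how the projector distance $\ln U_lU_l^*-UU^*\rn$ is reduced to the one-sided quantity. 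You run the CS/principal-angle decomposition for both norms at once, reading off that the nonzero eigenvalues of $U_lU_l^*-UU^*$ are $\pm\sin\theta_i$, which yields the spectral identity and makes the Frobenius factor $\sqrt{2}$ geometrically transparent. The paper instead cites the textbook identity $\ln U_lU_l^*-UU^*\rn_2=\ln\lb I-U_lU_l^*\rb UU^*\rn_2$ (Golub--Van Loan) for the spectral norm, and for the Frobenius norm gives a self-contained trace computation, $\ln\lb I-U_lU_l^*\rb UU^*\rn_F^2=r-\la U_lU_l^*,UU^*\ra=\ln U_lU_l^*\lb I-UU^*\rb\rn_F^2$, combined with the Frobenius-orthogonality of the two pieces of $U_lU_l^*-UU^*$, to obtain the same $\sqrt{2}$. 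Your route is more unified and exposes the geometry behind both constants, but it imports the $\pm\sin\theta_i$ eigenvalue structure of a difference of equal-rank projectors, a fact of essentially the same depth as what it replaces; the paper's Frobenius step needs nothing beyond traces and idempotency. You are also right --- and this is used implicitly in the paper as well --- that equal dimension of the two column spaces is essential for the reduction step, and your transposition trick for the $V$-inequalities matches the paper's remark that they are ``similarly established.''
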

The proof of Lem.~\ref{lem:for_proj_error} is presented in Appendix~\ref{app:for_proj_error}.
\begin{proof}[Proof of Lemma~\ref{lem:proj_error}] Let $\S$ be the tangent space of the rank $r$ matrix manifold at $X$ as defined in \eqref{eq:tangent_space}. Clearly we have 
$\P_{\S}\lb X\rb=X$. So 
\begin{align*}
\lb I-\P_{\S_l}\rb X & = \lb \P_{\S}-\P_{\S_l}\rb X \\
&=UU^*X+XVV^*-UU^*XVV^*-U_lU_l^*X-XV_lV_l^*+U_lU_l^*XV_lV_l^*\\
&=\lb UU^*-U_lU_l^*\rb X+X\lb VV^*-V_lV_l^*\rb\\
&\quad-UU^*XVV^*+UU^*XV_lV_l^*-UU^*XV_lV_l^*+U_lU_l^*XV_lV_l^*\\
&=\lb UU^*-U_lU_l^*\rb X+X\lb VV^*-V_lV_l^*\rb\\
&\quad-\lb UU^*-U_lU_l^*\rb XV_lV_l^*-UU^*X\lb VV^*-V_lV_l^*\rb\\
&=\lb UU^*-U_lU_l^*\rb X\lb I-V_lV_l^*\rb+\lb I-UU^*\rb X\lb VV^*-V_lV_l^*\rb\\
&=\lb UU^*-U_lU_l^*\rb X\lb I-V_lV_l^*\rb\\
&=\lb UU^*-U_lU_l^*\rb (X-X_l)\lb I-V_lV_l^*\rb,\numberthis\label{eq:in_proof_02}
\end{align*}
where the last two equalities follow from the fact $\lb I-UU^*\rb X=0$ and $X_l(I-V_lV_l^*)=0$.
Taking the Frobenius norm on both sides of \eqref{eq:in_proof_02} gives 
{\begin{align*}
\ln\lb I-\P_{\S_l}\rb X\rn_F &\leq \ln UU^*-U_lU_l^*\rn_2\ln X_l-X\rn_F\ln I-V_lV_l^*\rn_2\\
&\leq \frac{1}{\sigmin{X}}\ln X_l-X\rn_2\ln X_l-X\rn_F,
\end{align*}}
which completes the proof of \eqref{eq:proj_error_2}. Inequality \eqref{eq:proj_error_F} follows directly from the fact $\ln X_l-X\rn_2\leq \ln X_l-X\rn_F$.
\end{proof}
\subsection{Proof of Theorem~\ref{thm:gniht_dense}}
Though Alg.~\ref{alg:gniht} can be viewed as a  restarted variant of Alg.~\ref{alg:gcgiht} in which restarting occurs in each 
iteration, we choose to provide a separated proof to Thm.~\ref{thm:gniht_dense} in order to highlight the main architecture  of the proof.  
We first list two useful lemmas.
\begin{lemma}
Let $Z_1,~Z_2$ be two low rank matrices. Suppose $\la Z_1,Z_2\ra=0$ and $\rank(Z_1)+\rank(Z_2)\leq\min\lb m,n\rb$. Then 
\begin{equation*}\label{lem:for_gniht_dense_01}
\lab \la\A\lb Z_1\rb,\A\lb Z_2\rb\ra\rab\leq R_{\rank(Z_1)+\rank(Z_2)}\ln Z_1\rn_F\ln Z_2\rn_F.
\end{equation*}
\end{lemma}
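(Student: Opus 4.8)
The plan is to reduce the claim to two applications of the restricted isometry inequalities together with the polarization identity. First I would set $s=\rank(Z_1)+\rank(Z_2)$ and observe that the hypothesis $s\leq\min(m,n)$ guarantees that $R_s$ in Definition~\ref{def:RIC} is well defined. Since both sides of the asserted inequality are homogeneous of degree one in each of $Z_1$ and $Z_2$ separately, I may normalize so that $\ln Z_1\rn_F=\ln Z_2\rn_F=1$; the case where either factor vanishes is trivial because the inner product is then zero. The key algebraic observation is that, because $\la Z_1,Z_2\ra=0$, the cross term drops out and $\ln Z_1\pm Z_2\rn_F^2=\ln Z_1\rn_F^2+\ln Z_2\rn_F^2=2$ for both sign choices, while rank subadditivity gives $\rank(Z_1\pm Z_2)\leq\rank(Z_1)+\rank(Z_2)=s$. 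Hence both $Z_1+Z_2$ and $Z_1-Z_2$ are matrices of rank at most $s$, so the RIC bounds apply to each of them.

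Next I would invoke the polarization identity $\la\A\lb Z_1\rb,\A\lb Z_2\rb\ra=\frac14\lb\ln\A\lb Z_1+Z_2\rb\rn_2^2-\ln\A\lb Z_1-Z_2\rb\rn_2^2\rb$. Applying the upper RIC bound to the first term and the lower RIC bound to the second yields $\la\A\lb Z_1\rb,\A\lb Z_2\rb\ra\leq\frac14\lb(1+R_s)\ln Z_1+Z_2\rn_F^2-(1-R_s)\ln Z_1-Z_2\rn_F^2\rb=\frac14\lb 2(1+R_s)-2(1-R_s)\rb=R_s$. Reversing the roles of the two bounds produces the matching lower bound $-R_s$, so $\lab\la\A\lb Z_1\rb,\A\lb Z_2\rb\ra\rab\leq R_s$. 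Undoing the normalization reinstates the factor $\ln Z_1\rn_F\ln Z_2\rn_F$ and gives the stated estimate with $R_s=R_{\rank(Z_1)+\rank(Z_2)}$.

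I do not expect a genuine obstacle here; this is a standard near-orthogonality estimate. The only points that require a little care are the rank-subadditivity bound $\rank(Z_1\pm Z_2)\leq s$, which is precisely what forces the restricted isometry constant to carry the order $\rank(Z_1)+\rank(Z_2)$ rather than a smaller index, and the sign bookkeeping in the polarization step needed to extract both the upper and lower bounds. The orthogonality hypothesis $\la Z_1,Z_2\ra=0$ is used exactly once, to make $\ln Z_1\pm Z_2\rn_F^2$ independent of the sign so that the two RIC applications combine cleanly.
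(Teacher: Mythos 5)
Your proposal is correct and follows essentially the same route as the paper's own proof: normalize $\ln Z_1\rn_F=\ln Z_2\rn_F=1$, use orthogonality to get $\ln Z_1\pm Z_2\rn_F^2=2$, apply the RIC bounds to the rank-$\lb\rank(Z_1)+\rank(Z_2)\rb$ matrices $Z_1\pm Z_2$, and conclude via the polarization (parallelogram) identity. The only cosmetic differences are that you make the rank-subadditivity step and the two-sided sign bookkeeping explicit, which the paper leaves implicit.
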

This lemma is analogous to Lem.~$2.1$ in \cite{candes2008ric} for compressed sensing. The proof for the low rank matrix version can be found in \cite{candesplan2009oracle}. We repeat the proof in Appendix \ref{app:for_gniht_dense} to keep the paper self-contained.
\begin{lemma}\label{lem:for_gniht_dense_02}
Let $X_l=U_l\Sigma_l V_l^*$ be a rank $r$ matrix with the tangent space $\S_l$. Let $X$ be another  rank $r$ matrix. Then the Frobenius norm of $\P_{\S_l}\A^*\A\lb I-\P_{\S_l}\rb X$ 
can be bounded as 
\begin{equation*}
\ln \P_{\S_l}\A^*\A\lb I-\P_{\S_l}\rb(X)\rn_F\leq R_{3r}\ln \lb I-\P_{\S_l}\rb (X)\rn_F.
\end{equation*}
\end{lemma}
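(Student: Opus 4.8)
The plan is to reduce the bound to the near-orthogonality lemma stated immediately above by exploiting the low rank of the projection residual. Write $W:=\lb I-\P_{\S_l}\rb\lb X\rb$. The first and most important step is to observe, using the explicit projection formula \eqref{eq:S_proj}, that this residual factors as
\[
W=\lb I-\P_{\S_l}\rb\lb X\rb=\lb I-U_lU_l^*\rb X\lb I-V_lV_l^*\rb.
\]
Since $X$ has rank $r$ and the two outer factors are orthogonal projections, this identity shows at once that $\rank(W)\leq r$. Moreover $W\in\S_l^\perp$ by the definition of the projection, so $\la Y,W\ra=0$ for every $Y\in\S_l$.

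Next I would pass to the dual description of the Frobenius norm. Because $\P_{\S_l}\At\A W$ already belongs to $\S_l$, its norm is attained by testing against unit-norm matrices inside $\S_l$:
\[
\ln\P_{\S_l}\At\A W\rn_F=\sup_{Y\in\S_l,\,\ln Y\rn_F\leq 1}\la Y,\P_{\S_l}\At\A W\ra.
\]
For any such $Y$, self-adjointness of $\P_{\S_l}$ together with $\P_{\S_l}Y=Y$ collapses the inner product: $\la Y,\P_{\S_l}\At\A W\ra=\la Y,\At\A W\ra=\la\A Y,\A W\ra$.

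It then remains to bound $\lab\la\A Y,\A W\ra\rab$ uniformly over the admissible $Y$, and this is exactly where the near-orthogonality lemma enters. Every $Y\in\S_l$ has rank at most $2r$, while $\rank(W)\leq r$ from the first step, so $\rank(Y)+\rank(W)\leq 3r$; and $\la Y,W\ra=0$. Applying that lemma with $Z_1=Y$ and $Z_2=W$, and using that $R_s$ is nondecreasing in $s$, gives
\[
\lab\la\A Y,\A W\ra\rab\leq R_{3r}\ln Y\rn_F\ln W\rn_F\leq R_{3r}\ln W\rn_F.
\]
Taking the supremum over $Y$ yields the claimed estimate.

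The one point I would handle most carefully is the rank accounting: one must establish that $W$ has rank at most $r$, not merely the larger rank suggested by expanding $\lb I-\P_{\S_l}\rb\lb X\rb$ term by term. The factorization $W=\lb I-U_lU_l^*\rb X\lb I-V_lV_l^*\rb$ is precisely what secures $\rank(W)\leq r$, and hence the constant $R_{3r}$ rather than something larger; it simultaneously exhibits the orthogonality $\la Y,W\ra=0$ needed to invoke the lemma. Everything else is routine.
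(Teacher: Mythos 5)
Your proposal is correct and follows essentially the same route as the paper's proof: a dual (supremum) characterization of the Frobenius norm, adjointness to rewrite the inner product as $\la \A Y,\A W\ra$, and then Lemma~\ref{lem:for_gniht_dense_01} with the rank accounting $\rank(Y)\leq 2r$, $\rank(W)\leq r$ and the orthogonality $\la Y,W\ra=0$. The only cosmetic difference is that you restrict the supremum to $Y\in\S_l$ from the outset (and justify $\rank(W)\leq r$ explicitly via the factorization $W=\lb I-U_lU_l^*\rb X\lb I-V_lV_l^*\rb$), whereas the paper takes the supremum over all unit-norm $Z$ and bounds $\ln\P_{\S_l}(Z)\rn_F\leq 1$ at the end; these are the same argument.
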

The proof of Lem.~\ref{lem:for_gniht_dense_02} is also presented in Appendix \ref{app:for_gniht_dense}.
\begin{proof}[Proof of Theorem~\ref{thm:gniht_dense}]
The proof begins with the following inequality 
\begin{equation*}
\ln X_{l+1}-X\rn_F\leq \ln X_{l+1}-W_l\rn_F +\ln W_l-X\rn_F\leq 2\ln W_l-X\rn_F,
\end{equation*}
where the last inequality follows from the fact that $X_{l+1}$ is the best rank $r$ approximation of $W_l$ in the Frobenius norm. Substituting $W_l=X_l+\alpha_l\P_{\S_l} \lb G_l\rb$ 
into the above inequality gives
\begin{align*}
\ln X_{l+1}-X\rn_F & \leq 2\ln X_l+\alpha_l\P_{\S_l} \lb G_l\rb-X\rn_F\\
&=2\ln X_l+\alpha_l\P_{\S_l}\At\lb y-\A\lb X_l\rb\rb-X\rn_F\\
&=2\ln X_l-X-\alpha_l\P_{\S_l}\At\A\lb X_l-X\rb\rn_F\\
&\leq  2\ln X_l-X-\alpha_l\P_{\S_l}\At\A\P_{\S_l}\lb X_l-X\rb\rn_F\\
&\quad+2\alpha_l\ln \P_{\S_l}\At\A\lb I-\P_{\S_l}\rb\lb X_l-X\rb\rn_F\\
&\leq 2\ln \P_{\S_l}\lb X_l-X\rb-\alpha_l\P_{\S_l}\At\A\P_{\S_l}\lb X_l-X\rb\rn_F\\
&\quad +2\ln \lb I-\P_{\S_l}\rb\lb X_l-X\rb\rn_F\\
&\quad +2\alpha_l\ln \P_{\S_l}\At\A\lb I-\P_{\S_l}\rb\lb X_l-X\rb\rn_F\\
&=2\ln \lb\P_{\S_l}-\alpha_l\P_{\S_l}\At\A\P_{\S_l}\rb\lb X_l-X\rb\rn_F+2\ln \lb I-\P_{\S_l}\rb\lb X\rb\rn_F\\
&\quad +2\lab\alpha_l\rab\ln \P_{\S_l}\At\A\lb I-\P_{\S_l}\rb\lb X\rb\rn_F\\
&:=I_1+I_2+I_3,\numberthis\label{eq:gniht_dense_main}
\end{align*}
where the last inequality follows from the fact  $\lb I-\P_{\S_l}\rb\lb X_l\rb=0$. In the following, we will bound $I_1$, $I_2$ and $I_3$ one by one.

{\em Bound of $I_1$.} We first consider the spectral norm of $\P_{\S_l}-\P_{\S_l}\At\A\P_{\S_l}$. Since it is a symmetric operator, we have 
\begin{align*}
\ln \P_{\S_l}-\P_{\S_l}\At\A\P_{\S_l}\rn &=\sup_{\ln Z\rn_F=1}\lab \la \lb\P_{\S_l}-\P_{\S_l}\At\A\P_{\S_l}\rb\lb Z\rb,Z\ra\rab\\
&=\sup_{\ln Z\rn_F=1}\lab \ln \P_{\S_l}\lb Z\rb\rn_F^2-\ln \A\P_{\S_l}\lb Z\rb\rn_2^2\rab\\
&\leq\sup_{\ln Z\rn_F=1}R_{2r}\ln \P_{\S_l}\lb Z\rb\rn_F^2\leq R_{2r},\numberthis\label{eq:spectral_bound_gniht_dense}
\end{align*}
where the first inequality follows the RIC bound of the sensing operator by noting that $\rank\lb \P_{\S_l}\lb Z\rb\rb~\leq~2r$.
The RIC based bound for the descent stepsize $\alpha_l$ can be obtained as 
\begin{equation}\label{eq:stepsize_gniht_dense}
\frac{1}{1+R_{2r}}\leq\alpha_l=\frac{\ln\P_{U_l}\lb G_l\rb\rn_F^2}{\ln \A\P_{U_l}\lb G_l\rb\rn_2^2}\leq\frac{1}{1-R_{2r}}.
\end{equation}
Immediately we have 
\begin{equation}\label{eq:stepsize_gniht_dense1}
\left|\alpha_l-1\right|\leq\frac{R_{2r}}{1-R_{2r}}
\end{equation}
Combining \eqref{eq:spectral_bound_gniht_dense} and \eqref{eq:stepsize_gniht_dense1} gives the bound of the spectral norm of $\P_{\S_l}-\alpha_l\P_{\S_l}\At\A\P_{\S_l}$
\begin{align*}
\ln \P_{\S_l}-\alpha_l\P_{\S_l}\At\A\P_{\S_l}\rn & \leq \ln \P_{\S_l}-\P_{\S_l}\At\A\P_{\S_l}\rn+| 1-\alpha_l|\ln \P_{\S_l}\At\A\P_{\S_l}\rn\\
&\leq R_{2r}+\frac{R_{2r}}{1-R_{2r}}\lb 1+R_{2r}\rb = \frac{2R_{2r}}{1-R_{2r}}\label{eq:operator_bound_gniht_dense}.
\end{align*}
Thus $I_1$ can be bounded as 
\begin{equation}\label{eq:gniht_dense_I1}
I_1\leq\frac{4R_{2r}}{1-R_{2r}}\ln X_l-X\rn_F.
\end{equation}

{\em Bound of $I_2$.} The second term $I_2$ can be bounded as 
{\begin{equation}\label{eq:gniht_dense_I2}
I_2\leq \frac{2}{\sigmin{X}}\ln X_l-X\rn_F^2.
\end{equation}}
by Lem.~\ref{lem:proj_error}.

{\em Bound of $I_3$. } The third term $I_3$ can be bounded by applying Lem~\ref{lem:for_gniht_dense_02} as follows
\begin{align*}
I_3&\leq \frac{2R_{3r}}{1-R_{2r}}\ln \lb I-\P_{\S_l}\rb\lb X\rb\rn_F
\leq \frac{2R_{3r}}{1-R_{2r}}\ln X_l-X\rn_F,\numberthis\label{eq:gniht_dense_I3}
\end{align*}
where the second inequality follows from the fact $\lb I-\P_{\S_l}\rb\lb X_l\rb=0$.

Inserting \eqref{eq:gniht_dense_I1}, \eqref{eq:gniht_dense_I2} and \eqref{eq:gniht_dense_I3} into  \eqref{eq:gniht_dense_main} gives
{\begin{equation}\label{eq:recursive_gniht_dense_null}
\ln X_{l+1}-X\rn_F \leq \lb \frac{4R_{2r}+2R_{3r}}{1-R_{2r}}+\frac{2}{\sigmin{X}}\ln X_l-X\rn_F\rb\ln X_l-X\rn_F.
\end{equation}}

%
{\em Initialization.} Let $X_0=\H_r(\At\lb y\rb)$ and $U_0\in\R^{m\times r}$ be its left singular vectors.  {Define $Q_0\in\R^{m\times 2r}$ as an orthogonal matrix which 
spans the column subspaces of $X_0$ and $X$.  Let $Q_0^\perp$ be the complement of $Q_0$. Since 
\begin{align*}
\ln X_0-\At\lb y\rb\rn_F^2=\ln X_0-\P_{Q_0}\lb\At\lb y\rb\rb\rn_F^2+\ln \P_{Q_0^\perp}\lb \At\lb y\rb\rb\rn_F^2
\end{align*}
and
\begin{align*}
\ln X-\At\lb y\rb\rn_F^2&=\ln X-\P_{Q_0}\lb\At\lb y\rb\rb\rn_F^2+\ln \P_{Q_0^\perp}\lb \At\lb y\rb\rb\rn_F^2,
\end{align*}
the inequality $\ln X_0-\At\lb y\rb\rn_F\leq \ln X-\At\lb y\rb\rn_F$ implies 
\begin{align*}
\ln X_0-\P_{Q_0}\lb\At\lb y\rb\rb\rn_F\leq \ln X-\P_{Q_0}\lb\At\lb y\rb\rb\rn_F.\numberthis\label{eq:approximation_on_Q}
\end{align*}}
So we have 
\begin{align*}
\ln X_0-X\rn_F &\leq \ln X_0-\P_{Q_0}\lb\At\lb y\rb\rb\rn_F+ \ln \P_{Q_0}\lb\At\lb y\rb\rb-X\rn_F\\
&\leq 2\ln \P_{Q_0}\lb\At\lb y\rb\rb-X\rn_F\\
&= 2\ln \lb\P_{Q_0}-\P_{Q_0}\At\A\P_{Q_0}\rb\lb X\rb\rn_F\\
&\leq 2R_{2r} \ln X\rn_F,\numberthis\label{eq:gniht_dense_init_bound}
\end{align*}
where the last 
inequality follows from the RIC based bound of $\P_{Q_0}-\P_{Q_0}\At\A\P_{Q_0}$ which can be similarly obtained as in \eqref{eq:spectral_bound_gniht_dense}.

Define 
{\begin{equation}\label{eq:gniht_dense_gamma}
\gamma=\frac{4R_{2r}+2R_{3r}}{1-R_{2r}}+\frac{4R_{2r}}{\sigmin{X}}\ln X\rn_F.
\end{equation}}
If $\gamma<1$, 
inserting \eqref{eq:gniht_dense_init_bound} into \eqref{eq:recursive_gniht_dense_null} and proof by induction gives 
\begin{equation}\label{eq:recursive_gniht_dense}
\ln X_{l+1}-X\rn_F \leq \gamma\ln X_l-X\rn_F.
\end{equation}
 {Moreover, if 
\begin{equation*}
R_{3r}\leq \frac{\sigmin{X}}{\sigmax{X}}\cdot\frac{1}{12\sqrt{r}},
\end{equation*}
we have 
\begin{align*}
\gamma\leq \frac{6R_{3r}}{1-R_{3r}}+\frac{4R_{3r}\sqrt{r}\sigmax{X}}{\sigmin{X}}<1,\numberthis\label{eq:gniht_ric_cond}
\end{align*}
where we have utilized the fact $R_{2r}\leq R_{3r}$ following from Def.~\ref{def:RIC} and the inequality $\ln X\rn_F\leq\sqrt{r}\sigmax{X}$.}
\end{proof}
\subsection{Proof of Theorem~\ref{thm:gcgiht_dense}}
The following technical lemma which can be found for example in \cite{CGIHT} establishes the convergence of a three term recurrence relation. 
\begin{lemma}\label{lem:gcgiht_recursive}
Suppose $c_0$, $\tau_1,~\tau_2\geq 0$ and let $\mu=\frac{1}{2}\lb \tau_1+\sqrt{\tau_1^2+4\tau_2}\rb$. Assume $0\leq c_1\leq\mu c_0$
and define $0\leq c_l\leq\tau_{1,l-1}c_{l-1}+\tau_{2,l-2}c_{l-2}$ with $\tau_{1,l-1}\leq \tau_1$ and $\tau_{2,l-2}\leq\tau_2$ for $l\geq 2$.  If $\tau_1+\tau_2<1$, then $\mu<1$ and 
\begin{equation}\label{eq:gcgiht_recursive_lemma}
c_l\leq\mu^lc_0.
\end{equation}
\end{lemma}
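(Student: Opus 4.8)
The plan is to exploit the fact that $\mu$ is, by construction, the larger root of the quadratic $x^2 - \tau_1 x - \tau_2 = 0$. Rearranging the definition $\mu = \frac{1}{2}\lb \tau_1 + \sqrt{\tau_1^2 + 4\tau_2}\rb$ shows directly that
\begin{equation*}
\mu^2 = \tau_1 \mu + \tau_2,
\end{equation*}
and this single identity drives the whole argument. First I would dispose of the claim $\mu < 1$. Since $\tau_1, \tau_2 \geq 0$ and $\tau_1 + \tau_2 < 1$ force $\tau_1 < 1$, the quantity $2 - \tau_1$ is positive, so $\mu < 1$ is equivalent to $\sqrt{\tau_1^2 + 4\tau_2} < 2 - \tau_1$; squaring (legitimate since both sides are nonnegative) reduces this to $\tau_2 < 1 - \tau_1$, which is exactly the hypothesis $\tau_1 + \tau_2 < 1$. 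Hence $\mu < 1$, and note also $\mu \geq 0$.

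The bound $c_l \leq \mu^l c_0$ I would then establish by strong induction on $l$. The base cases $l = 0$ and $l = 1$ are immediate: $c_0 = \mu^0 c_0$, and $c_1 \leq \mu c_0$ is assumed. For the inductive step with $l \geq 2$, suppose $c_{l-1} \leq \mu^{l-1} c_0$ and $c_{l-2} \leq \mu^{l-2} c_0$. Using the recurrence together with $\tau_{1,l-1} \leq \tau_1$, $\tau_{2,l-2} \leq \tau_2$ and the nonnegativity of the $c_j$,
\begin{align*}
c_l &\leq \tau_{1,l-1} c_{l-1} + \tau_{2,l-2} c_{l-2} \leq \tau_1 c_{l-1} + \tau_2 c_{l-2}\\
&\leq \tau_1 \mu^{l-1} c_0 + \tau_2 \mu^{l-2} c_0 = \mu^{l-2}\lb \tau_1 \mu + \tau_2\rb c_0 = \mu^l c_0,
\end{align*}
where the final equality is precisely the identity $\mu^2 = \tau_1 \mu + \tau_2$. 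This closes the induction.

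The point worth emphasizing is that there is no genuine obstacle here: the only conceptual step is recognizing $\mu$ as the root of the characteristic quadratic, after which the propagation step holds with \emph{equality} rather than slack, so the constant $\mu$ is exactly the tight growth rate. The monotonicity inputs $\tau_{1,l-1} \leq \tau_1$ and $\tau_{2,l-2} \leq \tau_2$ serve only to reduce the variable coefficients to the constants $\tau_1, \tau_2$, reducing the problem to a constant-coefficient recurrence governed entirely by $\mu$. The one case to keep an eye on is the degenerate $\mu = 0$, which forces $\tau_1 = \tau_2 = 0$ and hence $c_l = 0$ for all $l \geq 1$; there the factor $\mu^{l-2}$ is read as $\mu^0 = 1$ when $l = 2$, and the displayed chain of inequalities remains valid.
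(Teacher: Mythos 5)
Your proof is correct, and it is the standard argument: the paper itself does not include a proof of this lemma (it defers to the cited reference \cite{CGIHT}), and the argument given there is precisely your characteristic-root induction, i.e., observing that $\mu$ satisfies $\mu^2=\tau_1\mu+\tau_2$ and propagating the bound by strong induction after replacing the variable coefficients $\tau_{1,l-1},\tau_{2,l-2}$ by $\tau_1,\tau_2$. Your additional care with the equivalence $\mu<1\Leftrightarrow\tau_1+\tau_2<1$ and with the degenerate case $\mu=0$ makes the write-up self-contained and would serve as a complete substitute for the external citation.
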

\begin{lemma}\label{lem:gcgiht_dense_alpha_beta} 
When the inequalities in Eq.~\eqref{eq:restart_cond_rm} are satisfied, we have 
\begin{equation}\label{eq:gcgiht_dense_alpha_beta}
|\beta_l|\leq \frac{\kappa_2R_{2r}}{1-R_{2r}}+\frac{\kappa_1\kappa_2}{1-R_{2r}}\quad\mbox{and}\quad|\alpha_l-1|\leq\frac{R_{2r}}{(1-R_{2r})-\kappa_1(1+R_{2r})}.
\end{equation} 
\end{lemma}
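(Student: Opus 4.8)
The plan is to exploit the single structural fact that drives everything: both $a := \P_{\S_l}\lb G_l\rb$ and $b := \P_{\S_l}\lb P_{l-1}\rb$, as well as the search direction $P_l = a + \beta_l b$, lie in the tangent space $\S_l$ and hence have rank at most $2r$. Consequently the RIC with constant $R_{2r}$ applies to them and to all their linear combinations, and the symmetric operator bound $\ln \P_{\S_l} - \P_{\S_l}\At\A\P_{\S_l}\rn \leq R_{2r}$ from \eqref{eq:spectral_bound_gniht_dense} controls every cross term $\la \A u, \A v\ra - \la u, v\ra$ with $u,v\in\S_l$. I write $g = \ln a\rn_F$ and $q = \ln b\rn_F$, so the restarting conditions \eqref{eq:restart_cond_rm} read $\lab\la a, b\ra\rab \leq \kappa_1 g q$ and $g \leq \kappa_2 q$.

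For $\beta_l$ I would decompose the numerator as $\la \A a, \A b\ra = \la a, b\ra + \la\lb\P_{\S_l}\At\A\P_{\S_l} - \P_{\S_l}\rb a, b\ra$, so that the operator bound and the first restarting condition give $\lab\la \A a,\A b\ra\rab \leq \lb\kappa_1 + R_{2r}\rb g q$. Lower bounding the denominator by the RIC, $\ln\A b\rn_2^2 \geq \lb 1 - R_{2r}\rb q^2$, yields $\lab\beta_l\rab \leq \frac{(\kappa_1+R_{2r}) g}{(1-R_{2r}) q}$, and the second restarting condition $g \leq \kappa_2 q$ converts this into the claimed $\frac{\kappa_2 R_{2r}}{1-R_{2r}} + \frac{\kappa_1\kappa_2}{1-R_{2r}}$. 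I would also record the companion estimate $\lab\beta_l\rab q \leq \frac{\kappa_1+R_{2r}}{1-R_{2r}}\,g$, which crucially does not cost a factor of $\kappa_2$ and is exactly what makes the $\alpha_l$ bound close.

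For $\alpha_l$ the entry point is that the choice of $\beta_l$ renders $P_l$ conjugate orthogonal to $b$, i.e. $\la \A P_l, \A b\ra = 0$, whence $\ln\A P_l\rn_2^2 = \la \A a, \A P_l\ra$. Since the numerator of $\alpha_l$ uses the plain inner product $\la a, P_l\ra$, this gives $\alpha_l - 1 = \frac{\la a, P_l\ra - \la \A a,\A P_l\ra}{\ln\A P_l\rn_2^2} = \frac{\la\lb \P_{\S_l} - \P_{\S_l}\At\A\P_{\S_l}\rb a, P_l\ra}{\ln\A P_l\rn_2^2}$, and the operator bound controls the numerator by $R_{2r}\,g\,\ln P_l\rn_F$. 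Lower bounding $\ln\A P_l\rn_2^2 \geq \lb 1-R_{2r}\rb\ln P_l\rn_F^2$ then reduces the whole estimate to $\lab\alpha_l - 1\rab \leq \frac{R_{2r}\,g}{(1-R_{2r})\ln P_l\rn_F}$, so the task collapses to a lower bound on $\ln P_l\rn_F$ itself.

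That lower bound is the crux, and it is precisely what the restarting conditions exist to protect: they prevent the conjugate correction from almost cancelling the gradient term, so $P_l$ cannot collapse relative to $a$. I would obtain it from Cauchy--Schwarz, $\ln P_l\rn_F \geq \frac{\la a, P_l\ra}{g} = g + \frac{\beta_l\la a, b\ra}{g}$, then bound the cross term with the first restarting condition together with $\lab\beta_l\rab q \leq \frac{\kappa_1+R_{2r}}{1-R_{2r}}g$ to reach $\ln P_l\rn_F \geq g\lb 1 - \frac{\kappa_1(\kappa_1+R_{2r})}{1-R_{2r}}\rb$, and finally use $\kappa_1^2 \leq \kappa_1$ to simplify the bracket to $\frac{(1-R_{2r}) - \kappa_1(1+R_{2r})}{1-R_{2r}}$. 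Substituting this back gives exactly $\lab\alpha_l - 1\rab \leq \frac{R_{2r}}{(1-R_{2r}) - \kappa_1(1+R_{2r})}$. The main obstacle throughout is keeping the denominators $\ln\A b\rn_2$ and $\ln P_l\rn_F$ bounded away from zero relative to $g$, and both restarting conditions enter for exactly this reason.
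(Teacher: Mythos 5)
Your proof is correct and follows essentially the same route as the paper's: the same decomposition of $\langle \A a, \A b\rangle$ into $\langle a,b\rangle$ plus an operator-deviation term for $\beta_l$, the same conjugacy identity $\|\A P_l\|_2^2 = \langle \A a, \A P_l\rangle$ reducing $\alpha_l-1$ to a quotient controlled by $\|\P_{\S_l}-\P_{\S_l}\At\A\P_{\S_l}\|\leq R_{2r}$, and the same Cauchy--Schwarz lower bound on $\|P_l\|_F$ via $\langle a,P_l\rangle \geq \|a\|_F^2 - |\beta_l\langle a,b\rangle|$. The only (harmless) variation is that you bound $|\beta_l|\,\|b\|_F \leq \frac{\kappa_1+R_{2r}}{1-R_{2r}}\|a\|_F$ using the first restart condition and then loosen $\kappa_1+R_{2r}\leq 1+R_{2r}$, whereas the paper bounds $|\beta_l|\leq \frac{1+R_{2r}}{1-R_{2r}}\frac{\|a\|_F}{\|b\|_F}$ directly by Cauchy--Schwarz and the RIC, both yielding the identical constant.
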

Notice that when restarting occurs  we have $\beta_l=0$ and $\alpha_l$ can be bounded as in \eqref{eq:stepsize_gniht_dense1}. So the bounds in \eqref{eq:gcgiht_dense_alpha_beta} still apply since $\kappa_1\geq 0$ and $\kappa_2\geq 0$.
\begin{proof}[Proof of Lemma~\ref{lem:gcgiht_dense_alpha_beta}]
We first bound $\beta_l$ as follows
\begin{align*}
\lab\beta_l\rab &=\lab\frac{\la\A\P_{\S_l}\lb G_l\rb,\A\P_{\S_l}\lb P_{l-1}\rb\ra}{\ln\A\P_{\S_l}\lb P_{l-1}\rb\rn_2^2}\rab\\
&=\lab\frac{\la\P_{\S_l}\lb G_l\rb,\P_{\S_l}\At\A\P_{\S_l}\lb P_{l-1}\rb\ra}{\ln\A\P_{\S_l}\lb P_{l-1}\rb\rn_2^2}\rab\\
&\leq\lab\frac{\la\P_{\S_l}\lb G_l\rb,\lb \P_{\S_l}-\P_{\S_l}\At\A\P_{\S_l}\rb\lb \P_{\S_l}P_{l-1}\rb\ra}{\ln\A\P_{\S_l}\lb P_{l-1}\rb\rn_2^2}\rab+\lab
\frac{\la\P_{\S_l}\lb G_l\rb,\P_{\S_l}\lb P_{l-1}\rb \ra}{\ln\A\P_{\S_l}\lb P_{l-1}\rb\rn_2^2}\rab\\
&\leq\frac{R_{2r}}{1-R_{2r}}\frac{\ln \P_{\S_l}\lb G_l\rb\rn_F}{\ln \P_{\S_l}\lb P_{l-1}\rb\rn_F}+\frac{1}{1-R_{2r}}\lab \frac{\la\P_{\S_l}\lb G_l\rb,\P_{\S_l}\lb P_{l-1}\rb \ra}{\ln\P_{\S_l}\lb P_{l-1}\rb\rn_F^2}\rab\\
&=\frac{R_{2r}}{1-R_{2r}}\frac{\ln \P_{\S_l}\lb G_l\rb\rn_F}{\ln \P_{\S_l}\lb P_{l-1}\rb\rn_F}+\frac{1}{1-R_{2r}}\lab \frac{\la\P_{\S_l}\lb G_l\rb,\P_{\S_l}\lb P_{l-1}\rb \ra}{\ln\P_{\S_l}\lb G_l\rb\rn_F\ln\P_{\S_l}\lb P_{l-1}\rb\rn_F}\rab \frac{\ln \P_{\S_l}\lb G_l\rb\rn_F}{\ln\P_{\S_l}\lb P_{l-1}\rb\rn_F}\\
&\leq \frac{\kappa_2R_{2r}}{1-R_{2r}}+\frac{\kappa_1\kappa_2}{1-R_{2r}},
\end{align*}
where the second inequality follows from the RIC bounds of $\P_{\S_l}-\P_{\S_l}\At\A\P_{\S_l}$ (see \eqref{eq:spectral_bound_gniht_dense}) and $\A\P_{\S_l}$, and the last
inequality follows from \eqref{eq:restart_cond_rm}. To bound $\alpha_l$, first note 
\begin{align*}
\lab \beta_l\la \P_{\S_l}\lb G_l\rb,\P_{\S_l}\lb P_{l-1}\rb\ra\rab &=\lab \frac{\la\A\P_{\S_l}\lb G_l\rb,\A\P_{\S_l}\lb P_{l-1}\rb\ra}{\ln\A\P_{\S_l}\lb P_{l-1}\rb\rn_2^2}\la \P_{\S_l}\lb G_l\rb,\P_{\S_l}\lb P_{l-1}\rb\ra\rab\\
&\leq\frac{1+R_{2r}}{1-R_{2r}}\frac{\ln\P_{\S_l}\lb G_l\rb\rn_F}{\ln\P_{\S_l}\lb P_{l-1}\rb\rn_F}\lab \la \P_{\S_l}\lb G_l\rb,\P_{\S_l}\lb P_{l-1}\rb\ra\rab\\
&\leq \frac{\kappa_1\lb1+R_{2r}\rb}{1-R_{2r}}\ln \P_{\S_l}\lb G_l\rb\rn_F^2,
\end{align*} 
where the last inequality follows from  \eqref{eq:restart_cond_rm}. Consequently,
\begin{align*}
\lab\la \P_{\S_l}\lb P_l\rb,\P_{\S_l}\lb G_l\rb\ra\rab &\geq \ln \P_{\S_l}\lb G_l\rb\rn_F^2 - \lab \beta_l\la \P_{\S_l}\lb G_l\rb,\P_{\S_l}\lb P_{l-1}\rb\ra\rab\\
&\geq \lb 1- \frac{\kappa_1\lb1+R_{2r}\rb}{1-R_{2r}}\rb\ln \P_{\S_l}\lb G_l\rb\rn_F^2
\end{align*}
and the application of the Cauchy-Schwarz inequality gives 
\begin{equation*}
\ln \P_{\S_l}\lb G_l\rb\rn_F\leq\frac{1}{ 1- \frac{\kappa_1\lb1+R_{2r}\rb}{1-R_{2r}}}\ln \P_{\S_l}\lb P_l\rb\rn_F.
\end{equation*}
Since $\alpha_l$ can be rewritten as  
\begin{align*}
\alpha_l &=\frac{\la \P_{\S_l}\lb G_l\rb,\P_{\S_l}\lb P_l\rb\ra}{\ln\A\P_{\S_l}\lb P_l\rb\rn_2^2}\\
&=1+\frac{\la \P_{\S_l}\lb G_l\rb,\P_{\S_l}\lb P_l\rb\ra- \la \A\P_{\S_l}\lb G_l\rb,\A\P_{\S_l}\lb P_l\rb\ra}{\ln\A\P_{\S_l}\lb P_l\rb\rn_2^2}\\
&=1+\frac{\la \P_{\S_l}\lb G_l\rb,\lb \P_{\S_l}-\P_{\S_l}\At\A\P_{\S_l}\rb\P_{\S_l}\lb P_l\rb\ra}{\ln\A\P_{\S_l}\lb P_l\rb\rn_2^2},
\end{align*}
it follows that 
\begin{equation*}
\lab\alpha_l-1\rab\leq\frac{R_{2r}}{1-R_{2r}}\frac{\ln \P_{\S_l}\lb G_l\rb\rn_F}{\ln \P_{\S_l}\lb P_l\rb\rn_F}\leq\frac{R_{2r}}{\lb1-R_{2r}\rb-\kappa_1\lb 1+R_{2r}\rb},
\end{equation*}
which completes the proof. 
\end{proof}

\begin{proof}[Proof of Theorem~\ref{thm:gcgiht_dense}]
Analogous to  \eqref{eq:gniht_dense_main}, we have
\begin{align*}
\ln X_{l+1}-X\rn_F & \leq 2\ln X_l+\alpha_lP_l-X\rn_F\\
&=2\ln X_l-X-\alpha \P_{\S_l}\At\A\lb X_l-X \rb+\alpha_l\beta_l\P_{\S_l}\lb P_{l-1}\rb\rn_F\\
&\leq 2\ln \lb\P_{\S_l}-\alpha_l\P_{\S_l}\At\A\P_{\S_l}\rb\lb X_l-X\rb\rn_F +2\ln \lb I-\P_{\S_l}\rb\lb X\rb\rn_F\\
&\quad +2\lab\alpha_l\rab\ln \P_{\S_l}\At\A\lb I-\P_{\S_l}\rb\lb X\rb\rn_F+ 2\lab \alpha_l\rab\lab\beta_l\rab\ln \P_{\S_l}\lb P_{l-1}\rb\rn_F\\
&:= I_4+I_5+I_6+I_7.
\end{align*}
The bound for $I_5$ is exactly the same as the bound for $I_2$, while $I_4$ and $I_6$ can be similarly bounded as $I_1$ and $I_3$, differing only in the bound for $\alpha_l$.
Combining the bounds for $\alpha_l$ \eqref{eq:gcgiht_dense_alpha_beta} and the spetral norm of $\P_{\S_l}-\P_{\S_l}\At\A\P_{\S_l}$ \eqref{eq:spectral_bound_gniht_dense} together gives the bound for the 
spectral norm of $\P_{\S_l}-\alpha_l\P_{\S_l}\At\A\P_{\S_l}$ 
\begin{align*}
\ln \P_{\S_l}-\alpha_l\P_{\S_l}\At\A\P_{\S_l}\rn & \leq \ln \P_{\S_l}-\P_{\S_l}\At\A\P_{\S_l}\rn+\lb 1-\alpha_l\rb\ln \P_{\S_l}\At\A\P_{\S_l}\rn\\
&\leq R_{2r}+\varepsilon_\alpha\lb 1+R_{2r}\rb,\label{eq:operator_bound_gcgiht_dense}
\end{align*}
where 
\begin{equation*}\varepsilon_\alpha := \frac{R_{2r}}{(1-R_{2r})-\kappa_1(1+R_{2r})}.\end{equation*}
So $I_4$ can be bounded as 
\begin{equation}\label{eq:gcgiht_dense_I4}
I_4\leq 2\lb R_{2r}+\varepsilon_\alpha\lb 1+R_{2r}\rb\rb\ln X_l-X\rn_F.
\end{equation}
Inserting the bound for $\alpha_l$ into $I_6$, together with Lem.~\ref{lem:for_gniht_dense_02} gives
\begin{align*}
I_6 &\leq 2R_{3r}\lb 1+\varepsilon_\alpha\rb\ln\lb I-\P_{\S_l}\rb\lb X\rb\rn_F\leq 2R_{3r}\lb 1+\varepsilon_\alpha\rb\ln X_l-X\rn_F.\numberthis\label{eq:gcgiht_dense_I6}
\end{align*}

To bound $I_7$, first note that $\beta_l\P_{\S_l}P_{l-1}$ can be expressed in terms of all the previous gradients 
\begin{equation}\label{eq:gcgiht_dense_P}
\beta_l\P_{\S_l}P_{l-1} = \sum_{j=0}^{l-1}\prod_{q=j+1}^l\beta_q\prod_{k=j}^l \P_{\S_k}\lb G_j\rb,\quad l\geq 1.
\end{equation}
Inserting \eqref{eq:gcgiht_dense_P} into $I_7$ gives
\begin{align*}
I_7 &\leq 2\lab\alpha_l\rab\sum_{j=0}^{l-1}\prod_{q=j+1}^l\lab\beta_q\rab\ln\P_{\S_j}\lb G_j\rb\rn_F\\
&\leq 2\lb 1+\varepsilon_\alpha\rb \sum_{j=0}^{l-1}\varepsilon_\beta^{l-j}\ln \P_{\S_j}\At\A\lb X_j-X\rb\rn_F\\
&\leq 2\lb 1+\varepsilon_\alpha\rb\lb 1+R_{2r}\rb \sum_{j=0}^{l-1}\varepsilon_\beta^{l-j}\ln  X_j-X\rn_F,
\end{align*}
where in the second inequality 
\begin{equation*}\varepsilon_\beta:=\frac{\kappa_2R_{2r}}{1-R_{2r}}+\frac{\kappa_1\kappa_2}{1-R_{2r}},
\end{equation*} and 
the third inequality follows from 
\begin{align*}
\ln \P_{\S_j}\At\A\lb X_j-X\rb\rn_F&=\sup_{\ln Z\rn_F=1}\la \P_{\S_j}\At\A\lb X_j-X\rb,Z\ra\\
&=\sup_{\ln Z\rn_F=1}\la\A\lb X_j-X\rb,\A\P_{\S_j}\lb Z\rb\ra\\
&\leq\sup_{\ln Z\rn_F=1}\ln \A\lb X_j-X\rb\rn_F\ln \A\P_{\S_j}\lb Z\rb\rn\\
&\leq \sup_{\ln Z\rn_F=1}\lb 1+R_{2r}\rb\ln X_j-X\rn_F\ln \P_{\S_j}\lb Z\rb\rn_F\\
&\leq \lb 1+R_{2r}\rb\ln X_j-X\rn_F.
\end{align*}

Combining the bounds for $I_4, ~I_5, ~I_6$ and $I_7$ together gives
\begin{align*}
\ln X_{l+1}-X\rn_F&\leq \lb 2(R_{2r}+R_{3r})(1+\varepsilon_\alpha)+2\varepsilon_\alpha+\frac{2}{\sigmin{X}}\ln X_l-X\rn_F\rb\ln X_l-X\rn_F\\
&\quad+2\lb 1+\varepsilon_\alpha\rb\lb 1+R_{2r}\rb \sum_{j=0}^{l-1}\varepsilon_\beta^{l-j}\ln  X_j-X\rn_F,\quad l\geq 1.
\end{align*}
When $l=0$, Alg.~\ref{alg:gcgiht} is exactly the same as Alg.~\ref{alg:gniht}, so it follows from \eqref{eq:recursive_gniht_dense_null} that
{\begin{equation}\label{eq:gcgiht_dense_recursive_l0}
\ln X_{1}-X\rn_F \leq \lb \frac{4R_{2r}+2R_{3r}}{1-R_{2r}}+\frac{2}{\sigmin{X}}\ln X_0-X\rn_F\rb\ln X_0-X\rn_F.
\end{equation}}

Define  $c_0= \ln X_0-X\rn_F$, 
{\begin{equation}\label{eq:gcgiht_dense_recursive_c1}
c_1 = \lb \frac{4R_{2r}+2R_{3r}}{1-R_{2r}}+\frac{2}{\sigmin{X}}c_0\rb c_0
\end{equation}}
and 
{\begin{align*}
c_{l+1} &= 
 \lb 2(R_{2r}+R_{3r})(1+\varepsilon_\alpha)+2\varepsilon_\alpha+\frac{2}{\sigmin{X}}c_l\rb c_l\\
&\quad+2\lb 1+\varepsilon_\alpha\rb\lb 1+R_{2r}\rb \sum_{j=0}^{l-1}\varepsilon_\beta^{l-j}c_j,\quad l\geq 1,\quad l\geq 1.\numberthis\label{eq:gcgiht_dense_recursive_cl}
\end{align*}}Then it is clear that $c_l\geq\ln X_l-X\rn_F$ for all $l\geq 0$. Morover, Eq.~\eqref{eq:gcgiht_dense_recursive_cl} can be rewritten in a three term  recurrence relation
\begin{align*}
c_{l+1}&=\lb \varepsilon_l+\varepsilon_\beta\rb c_l + \varepsilon_\beta\lb 2\lb 1+\varepsilon_\alpha\rb\lb 1+R_{2r}\rb-\varepsilon_{l-1}\rb c_{l-1}\\
&\leq \lb \varepsilon_l+\varepsilon_\beta\rb c_l + 2\varepsilon_\beta \lb 1+\varepsilon_\alpha\rb\lb 1+R_{2r}\rb c_{l-1},\numberthis\label{eq:gcgiht_dense_three}
\end{align*}
where
{
\begin{align*}
&\varepsilon_0= \frac{4R_{2r}+2R_{3r}}{1-R_{2r}}+\frac{2}{\sigmin{X}}c_0,\\
&\varepsilon_l =  2(R_{2r}+R_{3r})(1+\varepsilon_\alpha)+2\varepsilon_\alpha+\frac{2}{\sigmin{X}}c_l,\quad l\geq 1
\end{align*}}
Define
\begin{align*}
&{\tau_1 = 2(R_{2r}+R_{3r})(1+\varepsilon_\alpha)+2\varepsilon_\alpha+\frac{4R_{2r}}{\sigmin{X}}\ln X\rn_F+\varepsilon_\beta},\\
&\tau_2=2\varepsilon_\beta \lb 1+\varepsilon_\alpha\rb\lb 1+R_{2r}\rb,\\
&\mu=\frac{1}{2}\lb\tau_1+\sqrt{\tau_1^2+4\tau_2}\rb.
\end{align*}
Inequality~\eqref{eq:gniht_dense_init_bound} implies $c_0\leq 2R_{2r}\ln X\rn_F$. So together with the right inequality of \eqref{eq:gcgiht_dense_alpha_beta}, we have $\varepsilon_0\leq\tau_1<\mu$.
Thus if 
\begin{equation}
\gamma:=\tau_1+\tau_2 < 1,
\end{equation}
we have  $\mu<1$, and $c_1<c_0$, which in turn implies 
{\begin{equation*}
\varepsilon_1+\varepsilon_\beta \leq \lb 2(R_{2r}+R_{3r})(1+\varepsilon_\alpha)+2\varepsilon_\alpha+\frac{2}{\sigmin{X}}c_0\rb+\varepsilon_\beta<\tau_1.
\end{equation*}}Therefore the application of Lem.~\ref{lem:gcgiht_recursive} together with proof by induction implies 
 \begin{equation*}
 c_l\leq \mu^l c_0
 \end{equation*}
which completes the proof of the first part of Thm.~\ref{thm:gcgiht_dense}.
 
 When $\kappa_1=0.1$ and $\kappa_2=1$, the sufficient condition for $\gamma<1$ can be verified similarly to \eqref{eq:gniht_ric_cond}.
\end{proof}

\section{Discussion and Future Direction}\label{sec:discuss}
This paper presents theoretical recovery guarantees of a class of Riemannian gradient descent and conjugate gradient algorithms for low rank matrix recovery in terms 
of the restricted isometry constant of the sensing operator. The main results in Thms.~\ref{thm:gniht_dense} and ~\ref{thm:gcgiht_dense} depend on the condition number and the rank of the 
measured matrix. To eliminate the dependence on the condition number, the deflation or stagewise technique in \cite{JaNeSa2012ammc} may be similarly applicable for  Algs.~\ref{alg:gniht} and \ref{alg:gcgiht}.
However, it should be interesting to develop and analyse preconditioned Riemannian gradient  descent and conjugate gradient descent algorithms since they are more favourable in practice. On the 
other hand, to eliminate the dependence on $\sqrt{r}$, it may be necessary to study the convergence   rate of the Riemannian optimization algorithms in terms of the matrix operator norm rather than the Frobenius norm.  However, the contraction of  iterates under the matrix operator norm remains a question.
 In this paper, we have discussed a restarted variant of the Riemannian conjugate gradient descent algorithm with the selection of $\beta_l$ being  developed in \cite{CGIHT}, and guarantee analysis for the other selections of $\beta_l$ in \eqref{eq:beta_all}  as well as for different Riemannian metric \cite{mishra_prec,mishra_geometry} is also an interesting research topic. 


The Riemannian gradient descent and conjugate gradient descent algorithms presented in this paper apply equally to other low rank recovery problems with difference measurement models, such as  phase retrieval  \cite{GeSaPhase,wirtinger,phaselift1,kacz_phase}  and blind deconvolution \cite{blind_conv} where the underlying matrix after lifting is rank one. This line of research will be pursued independently in the future.  
Since the condition number of a rank one matrix is always equal to one, 
 it is worth investigating whether we can obtain similar recovery guarantees for phase retrieval and blind deconvolution, but directly in terms of the sampling complexity and  without explicit dependence on the condition number of the underlying matrix. Finally, it may be possible to generalize the notion in low rank matrix manifold, for example the restricted isometry constant, to the abstract framework of Riemannian manifold and then extend the analysis in this paper to more general Riemannian  gradient descent and conjugate gradient descent algorithms.
\section*{Acknowledgments}
KW has been supported by DTRA-NSF grant No.1322393. SL was supported in part by the Hong Kong RGC grant 16303114.
\bibliography{iht}
\bibliographystyle{plain}
\begin{appendix}\label{sec:appen}
\section{Proof of Lemma~\ref{lem:for_proj_error}}\label{app:for_proj_error}
We only prove the left inequalities in \eqref{eq:for_proj_error_01} and \eqref{eq:for_proj_error_02} and 
the right inequalities can be similarly established. 
The left inequality of \eqref{eq:for_proj_error_01} follows from direct calculations 
\begin{align*}
\ln U_lU_l^*-UU^*\rn_2&=\ln UU^*\lb I-U_lU_l^*\rb\rn_2=\ln \lb I-U_lU_l^*\rb UU^*\rn_2\\
&=\ln \lb I-U_lU_l^*\rb XV\Sigma^{-1}U^*\rn_2\\
&=\ln \lb I-U_lU_l^*\rb \lb X_l-X\rb V\Sigma^{-1}U^*\rn_2\\
&\leq \ln I-U_lU_l^*\rn_2\ln X_l-X\rn_2 \ln V\rn_2\ln\Sigma^{-1}\rn_2\ln U^*\rn_2\\
&=\frac{\ln X_l-X\rn_2}{\sigmin{X}}, 
\end{align*}
where the first equality follows from a standard result in textbook, see for example \cite[Thm.~2.6.1]{GolubLoan2013mc}, and
the fourth equality follows from the fact $\lb I-U_lU_l^*\rb X_l=0$.

To prove the left inequality of \eqref{eq:for_proj_error_02}, we first show that 
\begin{equation}\label{eq:in_proof_01}
\ln \lb I-U_lU_l^*\rb UU^*\rn_F=\ln U_lU_l^*\lb I-UU^*\rb\rn_F.
\end{equation}
Equality \eqref{eq:in_proof_01} can be obtained by noting that
\begin{align*}
\ln \lb I-U_lU_l^*\rb UU^*\rn_F^2 &= \la \lb I-U_lU_l^*\rb UU^*, \lb I-U_lU_l^*\rb UU^*\ra\\
&=\la I-U_lU_l^*,UU^*\ra = r-\la U_lU_l^*,UU^*\ra
\end{align*}
and
\begin{align*}
\ln U_lU_l^*\lb I-UU^*\rb\rn_F^2 &= \la U_lU_l^*\lb I-UU^*\rb, U_lU_l^*\lb I-UU^*\rb\ra\\
&=\la U_lU_l^*,I-UU^*\ra = r-\la U_lU_l^*,UU^*\ra.
\end{align*}
So it follows that
\begin{align*}
\ln U_lU_l^*-UU^*\rn_F & = \sqrt{2}\ln \lb I-U_lU_l^*\rb UU^*\rn_F\\
&=\sqrt{2}\ln \lb I-U_lU_l^*\rb XV\Sigma^{-1}U^*\rn_F\\
&=\sqrt{2}\ln \lb I-U_lU_l^*\rb \lb X_l-X\rb V\Sigma^{-1}U^*\rn_F\\
&\leq\sqrt{2} \ln I-U_lU_l^*\rn_2\ln X_l-X\rn_F \ln V\rn_2\ln\Sigma^{-1}\rn_2\ln U^*\rn_2\\
&=\frac{\sqrt{2}\ln X_l-X\rn_F}{\sigmin{X}}.
\end{align*}
\section{Proofs of Lemmas~\ref{lem:for_gniht_dense_01} and \ref{lem:for_gniht_dense_02}}\label{app:for_gniht_dense}
\begin{proof}[Proof of Lemma~\ref{lem:for_gniht_dense_01}]
The proof follows that for  \cite[Lem.~2.1]{candes2008ric} in compressed sensing. Without loss of generality, assume $\ln Z_1\rn_F=1$ and 
$\ln Z_2\rn_F=1$. Then the application of the RIC bounds gives 
\begin{align*}
&\lb 1-R_{rank(Z_1)+rank(Z_2)}\rb \ln Z_1\pm Z_2\rn_F^2 \leq\ln \A\lb Z_1\pm Z_2\rb\rn_2^2,\\
&\lb 1+R_{rank(Z_1)+rank(Z_2)}\rb \ln Z_1\pm Z_2\rn_F^2 \geq\ln \A\lb Z_1\pm Z_2\rb\rn_2^2.
\end{align*}
Since $\la Z_1,Z_2\ra=0$, we have $\ln Z_1\pm Z_2\rn_F^2=2$. So
\begin{equation*}
2\lb 1-R_{rank(Z_1)+rank(Z_2)}\rb \leq\ln \A\lb Z_1\pm Z_2\rb\rn_2^2\leq 2\lb 1+R_{rank(Z_1)+rank(Z_2)}\rb.
\end{equation*}
Finally the parallelogram identity gives 
\begin{equation*}
\lab \la \A\lb Z_1\rb,\A\lb Z_2\rb\ra \rab=\frac{1}{4}\lab \ln \A\lb Z_1+Z_2\rb\rn_2^2-\ln \A\lb Z_1- Z_2\rb\rn_2^2\rab\leq R_{rank(Z_1)+rank(Z_2)},
\end{equation*}
which completes the proof.
\end{proof}
\begin{proof}[Proof of Lemma~\ref{lem:for_gniht_dense_02}]
\begin{align*}
\ln \P_{\S_l}\A^*\A\lb I-\P_{\S_l}\rb (X)\rn_F & =\sup_{\ln Z\rn_F=1}\lab\la \P_{\S_l}\A^*\A\lb I-\P_{\S_l}\rb (X),Z\ra\rab\\
&=\sup_{\ln Z\rn_F=1}\lab\la \A\lb I-\P_{\S_l}\rb (X), \A\P_{\S_l}(Z)\ra\rab\\
&\leq\sup_{\ln Z\rn_F=1} R_{3r}\ln \lb I-\P_{\S_l}\rb (X)\rn_F\ln \P_{\S_l}(Z)\rn_F\\
&\leq R_{3r}\ln \lb I-\P_{\S_l}\rb (X)\rn_F,
\end{align*}
where the second to last inequality follows from Lem.~\ref{lem:for_gniht_dense_01} together with the fact $\rank\lb \lb I-\P_{\S_l}\rb (X)\rb\leq r$ and 
$\rank\lb  \P_{\S_l}(Z)\rb\leq 2r$.
\end{proof}
\end{appendix}
\end{document}